 \def \ProbUnion{P\left(\bigcup_{i=1}^N A_i\right)}
 \def \lKAT{\ell_{\textrm{KAT}}}
 \def \lGK{\ell_{\textrm{GK}}}
 \def \bSigma{\boldsymbol{\Sigma}}
 \def \balpha{\boldsymbol{\alpha}}
 \def \bone{\boldsymbol{1}}
 \def \st{\textrm{s.t. }}
 \def \bc{\boldsymbol{c}}
 \def \ProbUnion{P\left(\bigcup_{i=1}^N A_i\right)}
 \def \lNEWI{\ell_{\textrm{YAT-I}}}
 \def \lNEWII{\ell_{\textrm{YAT-II}}}
 \def \lNEWIII{\ell_{\textrm{NEW-I}}}
 \def \lNEWIV{\ell_{\textrm{NEW-II}}}
 \def \uNEWIV{\hbar_{\textrm{NEW-I}}}
 \def \uNEWV{\hbar_{\textrm{NEW-II}}}
 \def \lKAT{\ell_{\textrm{KAT}}}
 \def \lPG{\ell_{\textrm{PG}}}
 \def \lDC{\ell_{\textrm{DC}}}
 \def \lGK{\ell_{\textrm{GK}}}
\newtheorem{theorem}{Theorem}[section]
\newtheorem{corollary}{Corollary}
\newtheorem{lemma}[theorem]{Lemma}
\theoremstyle{definition}
\newtheorem{remark}{Remark}
\title[On Bounding the Union Probability] 
      {On Bounding the Union Probability Using Partial Weighted Information}
\author[Jun Yang and Fady Alajaji and Glen Takahara]{}
 \keywords{Probability of a union of events, lower and upper bounds, linear programming, error probability.}
 \email{jun@utstat.toronto.edu}
 \email{fady@mast.queensu.ca}
 \email{takahara@mast.queensu.ca}
\thanks{This work was supported in part by NSERC of Canada.
Parts of this work were presented at the 2015 IEEE International Symposium on Information
Theory, Hong Kong, June~2015.}
\begin{document}
\maketitle

\centerline{\scshape Jun Yang}
\medskip
{\footnotesize
 \centerline{Department of Statistical Sciences}
   \centerline{University of Toronto}
   \centerline{Toronto, ON M5S3G3, Canada}
} 

\medskip

\centerline{\scshape Fady Alajaji and Glen Takahara}
\medskip
{\footnotesize
 \centerline{Department of Mathematics and Statistics}
   \centerline{Queen's University}
   \centerline{Kingston ON K7L3N6, Canada}
}

\bigskip


\begin{abstract}

Effective bounds on the union probability are well known to be beneficial in the analysis 
of stochastic problems in many areas, including probability theory, information theory, 
statistical communications, computing and operations research. 
In this work we present new results on bounding the probability of a finite union of events, $\boldmath{\ProbUnion}$, for a fixed positive integer $\boldmath{N}$, using partial information on the events in terms of $\boldmath{\{P(A_i)\}}$ and $\boldmath{\{\sum_j c_jP(A_i\cap A_j)\}}$ where $\boldmath{c_1}$, $\boldmath{\dots}$, $\boldmath{c_N}$ are given weights. We derive two new classes of lower bounds of at most pseudo-polynomial computational complexity. These classes of lower bounds generalize the existing bound in \cite{Kuai2000} and recent bounds in \cite{Yang2014,Yang2014ISIT} and are numerically shown to be tighter in some cases than the Gallot-Kounias bound \cite{Gallot1966,Kounias1968} and the Pr{\'e}kopa-Gao bound \cite{Prekopa2005} which require more information on the events probabilities. 
\end{abstract}

\section{Introduction}
Lower and upper bounds on the union probability $\ProbUnion$ in terms of the individual event probabilities $P(A_i)$'s and the pairwise event probabilities $P(A_i\cap A_j)$'s were actively investigated in the recent past. The optimal bounds can be obtained numerically by solving linear programming (LP) problems with $2^N$ variables (for instance, see \cite{VenezianiUnpublished,Prekopa2005}). Since the number of variables is exponential in the number of events, $N$, some suboptimal but numerically efficient bounds were proposed, such as the algorithmic Bonferroni-type lower/upper bounds in \cite{Kuai2000a,Behnamfar2005}.

Among the established analytical bounds is the Kuai-Alajaji-Takahara lower bound (for convenience, hereafter referred to as the KAT bound) \cite{Kuai2000} that was shown to be better than the Dawson-Sankoff (DS) bound \cite{Dawson1967} and the D.~de Caen (DC) bound \cite{DeCaen1997}. Noting that the KAT bound is expressed in terms of $\{P(A_i)\}$ and only the \emph{sums} of the pairwise event probabilities, i.e., $\{\sum_{j:j\neq i} P(A_i\cap A_j)\}$, in order to fully exploit all pairwise event probabilities, it is observed in \cite{Behnamfar2007,Hoppe2006,Hoppe2009} that the analytical bounds can be further improved algorithmically by optimizing over subsets. Furthermore, in \cite{Prekopa2005}, the KAT bound is extended by using additional partial information such as the sums of joint probabilities of three events, i.e., $\{\sum_{j,l}P(A_i\cap A_j\cap A_l), i=1,\dots,N\}$. Recently, using the same partial information as the KAT bound, i.e., $\{P(A_i)\}$  and  $\{\sum_{j:j\neq i} P(A_i\cap A_j)\}$, the optimal lower/upper bound as well as a new analytical bound which is sharper than the KAT bound were developed by Yang-Alajaji-Takahara in \cite{Yang2014,Yang2014ISIT} (for convenience, these two bounds are respectively referred to as the YAT-I and YAT-II bounds).

In this paper, we extend the existing analytical lower bounds, the KAT bound and the YAT-II bound, and establish two new classes of lower bounds on $\ProbUnion$ using $\{P(A_i)\}$ and $\{\sum_j c_j P(A_i\cap A_j)\}$ for a given weight or parameter vector $\bc=\left(c_1,\dots,c_N\right)^T$. These lower bounds are shown to have at most pseudo-polynomial computational complexity and to be sharper in certain cases than the existing Gallot-Kounias (GK) bound \cite{Gallot1966,Kounias1968} and Pr{\'e}kopa-Gao (PG) bound \cite{Prekopa2005}, although the later bounds employ more information on the events joint probabilities. 

More specifically, we first propose a novel expression for the union probability using given weight vector $\bc$. Then we show using the Cauchy-Schwarz inequality that several existing bounds, such as the bound in \cite{Cohen2004}, the DC bound and the GK bound, can be directly derived from this new expression. Next, we derive two new classes of lower bounds as functions of the weight vector $\bc$ by solving linear programming problems. The existing KAT and YAT-II analytical bounds are shown to be special cases of these two new classes of lower bounds. Furthermore, it is noted that the proposed lower bounds can be sharper than the GK bound under some conditions.

We emphasize that our bounds can be applied to any general estimation
problem involving the probability of a finite union of events. In
particular, they can be applied to effectively estimate and analyze the
error performance of a wide variety of coded or uncoded communication
systems under different decoding techniques (see
\cite{Seguin1998,Kuai2000a,Yousefi2004,Cohen2004,Behnamfar2005,Nguyen2005,Sasson2006,Behnamfar2007,Bettancourt2008,Mao2013,Yang2014ISIT,Ozcelikkale2014}	
and the references therein). Such bounds can also be pertinently useful in the
analysis of asymptotic problems such as the Borel-Cantelli Lemma
and its generalization (e.g., \cite{Erdos1959, Feng2009, Frolov2012, Feng2013}).
Finally, we note that the proposed bounds provide useful tools for
chance-constrained stochastic programs (e.g., see \cite{Prekopa1995,Shapiro2014}) in operations research. More specifically, using partial information of uncertainty, the proposed bounds on the union probability can be applied to formulate tractable conservative approximations of chance-constrained stochastic problems, which can be solved efficiently and produce feasible solutions for the original problems (see, for instance, \cite{Pinter1989,Nemirovski2006,Ben-Tal2009}). An example of such application is the work by \cite{Ahmed2013} on the probabilistic set covering problem with correlations, where the existing KAT bound is used to tackle the case where only partial information on the correlation is available.

The outline of this paper is as follows. In Section \ref{sec_2}, we propose a new expression of the union probability using weight vector $\bc$ and show that many existing bounds can be directly derived from this expression. In Section \ref{sec_3}, we develop two new classes of lower bounds as functions of the weight vector $\bc$ and discuss their connection with the existing bounds, including the KAT bound, the YAT-II bound and the GK bound. Finally, in Section \ref{sec_4}, we compare via numerical examples existing lower bounds with the proposed lower bounds under different choices of weight vectors.

\section{Lower Bounds via the Cauchy-Schwarz Inequality}\label{sec_2}
For simplicity, and without loss of generality, we assume the events $\{A_1,\dots,A_N\}$ are in a finite probability space $(\Omega ,\mathscr{F},P)$, where $N$ is a fixed positive integer. Let $\mathscr{B}$ denote the collection of all non-empty subsets of $\{1,2,\dots,N\}$. Given $B\in\mathscr{B}$, we let $\omega_B$ denote
the atom in $\cup_{i=1}^N A_i$  such that for all $i=1,\dots,N$,
$\omega_B\in A_i$ if $i\in B$ and $\omega_B\notin A_i$ if $i\notin B$
(note that some of these ``atoms'' may be the empty set). For ease of notation, for a singleton $\omega\in\Omega$, we denote $P(\{\omega\})$ by $p(\omega )$ and $P(\omega_B)$ by $p_B$. Since $\{\omega_B: i\in B\}$ is the collection of all the atoms in $A_i$, we have $P(A_i)=\sum_{\omega\in A_i} p(\omega)=\sum_{B\in\mathscr{B}: i\in B} p_B$, and

\begin{equation}\label{ProbUnion_as_sum_of_pB}
	P\left(\bigcup_{i=1}^N A_i\right)=\sum_{B\in\mathscr{B}}p_B.
\end{equation}
Suppose there are $N$ functions $f_i(B), i=1,\dots,N$ such that $\sum_{i=1}^N f_i(B)=1$ for any $B\in\mathscr{B}$ (i.e., for any atom $\omega_B$). If we further assume that $f_i(B)=0$ if $i\notin B$ (i.e., $\omega_B\notin A_i$), we can write
\begin{equation}\label{general_prob_union}
	\begin{split}
		P\left(\bigcup_{i=1}^N A_i\right)&=\sum_{B\in\mathscr{B}}\left(\sum_{i=1}^N f_i(B)\right)p_B
		=\sum_{i=1}^N \sum_{B\in\mathscr{B}: i\in B} \hspace{-0.1in} f_i(B)p_B.
	\end{split}
\end{equation}

Note that if we define the degree of $\omega$, $\deg(\omega)$, to be the number of $A_i$'s that contain $\omega$, then by the definition of $\omega_B$, we have $\deg(\omega_B)=|B|$. Therefore,
\begin{equation}\label{f_def_deg}
	f_i(B)=\left\{ \begin{array}{ll}
		\frac{1}{|B|}=\frac{1}{\deg(\omega_B)} & \textrm{if $i\in B$}\\
		0 & \textrm{if $i\notin B$}
	\end{array} \right.
\end{equation}
satisfies $\sum_{i=1}^N f_i(B)=1$ and (\ref{general_prob_union}) becomes
\begin{equation}\label{traditional_expression_prob_union}
	P\left(\bigcup_{i=1}^N A_i\right)=\sum_{i=1}^N \sum_{B\in\mathscr{B}: i\in B} \frac{p_B}{\deg(\omega_B)}=\sum_{i=1}^N\sum_{\omega\in A_i}\frac{p(\omega)}{\deg(\omega)}.
\end{equation}
Note that many of the existing bounds, such as the DC bound, the KAT bound and the recent bounds in \cite{Yang2014} and \cite{Yang2014ISIT}, are based on (\ref{traditional_expression_prob_union}).

In the following lemma, we propose a generalized expression of (\ref{traditional_expression_prob_union}). To the best of our knowledge this lemma is novel.
\begin{lemma}
	Suppose $\{\omega_B, B\in\mathscr{B}\}$ are all the $2^N-1$ atoms in $\bigcup_i A_i$. If $\bc=(c_1,\dots,c_N)^T\in\mathbb{R}^N$ satisfies
	\begin{equation}\label{condition1}
		\sum_{k\in B} c_k\neq 0 ,\quad \textrm{for all}\quad B\in\mathscr{B}
	\end{equation}
	then we have
	\begin{eqnarray}\label{prob_union}
		P\left(\bigcup_{i=1}^N A_i\right)&=&\sum_{i=1}^N\sum_{B\in\mathscr{B}: i\in B}\frac{c_ip_B}{\sum_{k\in B} c_k} \nonumber \\
		&=&\sum_{i=1}^N\sum_{\omega\in A_i}\frac{c_i p(\omega)}{\sum_{\{k:\omega\in A_k\}}c_k}.
	\end{eqnarray}
\end{lemma}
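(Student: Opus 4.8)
The plan is to mimic the derivation of (\ref{traditional_expression_prob_union}) but with the weighted function $f_i(B)$ replaced by a $\bc$-weighted version. Specifically, I would define
\begin{equation*}
	f_i(B)=\left\{ \begin{array}{ll}
		\dfrac{c_i}{\sum_{k\in B} c_k} & \textrm{if $i\in B$,}\\[2mm]
		0 & \textrm{if $i\notin B$,}
	\end{array} \right.
\end{equation*}
which is well-defined precisely because condition (\ref{condition1}) guarantees the denominator $\sum_{k\in B} c_k$ is nonzero for every $B\in\mathscr{B}$.

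The first step is to verify the hypothesis of (\ref{general_prob_union}), namely that $\sum_{i=1}^N f_i(B)=1$ for every $B\in\mathscr{B}$ and that $f_i(B)=0$ whenever $i\notin B$. The latter holds by construction. For the former, fix $B\in\mathscr{B}$; then $\sum_{i=1}^N f_i(B)=\sum_{i\in B} \frac{c_i}{\sum_{k\in B}c_k}=\frac{\sum_{i\in B}c_i}{\sum_{k\in B}c_k}=1$, again using (\ref{condition1}) to ensure we are not dividing by zero.

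With the hypotheses checked, the second step is simply to invoke (\ref{general_prob_union}) with this choice of $f_i$, which immediately yields
\begin{equation*}
	P\left(\bigcup_{i=1}^N A_i\right)=\sum_{i=1}^N \sum_{B\in\mathscr{B}: i\in B} f_i(B)p_B=\sum_{i=1}^N\sum_{B\in\mathscr{B}: i\in B}\frac{c_i p_B}{\sum_{k\in B} c_k},
\end{equation*}
which is the first equality in (\ref{prob_union}). The third step is to rewrite this atom-indexed sum in terms of singletons: since $\{\omega_B : i\in B\}$ enumerates the atoms of $A_i$ and $\deg(\omega_B)=|B|$ translates (as observed before (\ref{f_def_deg})) into the fact that $\{k : \omega_B\in A_k\}=B$, we have $\sum_{k\in B}c_k=\sum_{\{k:\omega_B\in A_k\}}c_k$. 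Grouping atoms of a given degree into the corresponding points $\omega$ and writing $p(\omega)$ for $p_B$ gives the second equality in (\ref{prob_union}).

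There is essentially no analytic obstacle here: the result is a bookkeeping identity, and the only subtlety — the reason condition (\ref{condition1}) is imposed — is the need to avoid a zero denominator when normalizing the weights $c_i$ over each atom. The main thing to be careful about is the passage from the sum over atoms $\omega_B$ to the sum over singletons $\omega\in A_i$, which relies on the correspondence $B\leftrightarrow\{k:\omega\in A_k\}$ already established in the paragraph containing (\ref{traditional_expression_prob_union}); once that identification is made explicit the proof is complete.
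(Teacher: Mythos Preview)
Your proof is correct and follows essentially the same approach as the paper: define the $\bc$-weighted $f_i(B)$, verify it satisfies the hypotheses of (\ref{general_prob_union}) thanks to condition (\ref{condition1}), and conclude. Your write-up is in fact slightly more explicit than the paper's, which compresses the verification of $\sum_i f_i(B)=1$ and the atom-to-singleton passage into a single sentence.
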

\begin{proof}
If we define
\begin{equation}\label{f_def_c}
	f_i(B)=\left\{ \begin{array}{ll}
		\frac{c_i}{\sum_{k\in B} c_k} & \textrm{if $i\in B$}\\
		0 & \textrm{if $i\notin B$}
	\end{array} \right.
\end{equation}
where the parameter vector $\bc=(c_1,c_2,\dots,c_N)^T$ satisfies $\sum_{k\in B} c_k\neq 0$ for all $B\in\mathscr{B}$ (therefore $c_i\neq 0, i=1,\dots,N$), then $\sum_i f_i(\omega)=1$ holds and we can get (\ref{prob_union}) from~(\ref{general_prob_union}).
\end{proof}

Note that (\ref{prob_union}) holds for any $\bc$ that satisfies (\ref{condition1}) and is clearly a generalized expression of (\ref{traditional_expression_prob_union}).

\subsection{Relation to the Cohen-Merhav bound by \cite{Cohen2004}}
Let $f_i(B)>0$ and $m_i(\omega_B)$ be non-negative real functions. Then by the Cauchy-Schwarz inequality,
\begin{equation}\label{cauchy_using_m_i}
	\left[\sum_{B: i\in B}f_i(B)p_B\right]\left[\sum_{B: i\in B}\frac{p_B}{f_i(B)}m_i^2(\omega_B)\right]\ge
	\left[\sum_{B: i\in B}p_Bm_i(\omega_B)\right]^2.
\end{equation}
Thus, using (\ref{general_prob_union}), we have
\begin{equation}\label{}
	P\left(\bigcup_{i=1}^N A_i\right)=\sum_{i=1}^N \sum_{B: i\in B}f_i(B)p_B\ge\sum_{i=1}^N\frac{\left[\sum_{B: i\in B}p_Bm_i(\omega_B)\right]^2}{\sum_{B: i\in B}\frac{p_B}{f_i(B)}m_i^2(\omega_B)}.
\end{equation}
If we define $f_i(B)$ by (\ref{f_def_deg}), then the above inequality reduces to
\begin{equation}\label{temp_using_for_below}
	P\left(\bigcup_{i=1}^N A_i\right)\ge\sum_{i=1}^N\frac{\left[\sum_{B: i\in B}p_Bm_i(\omega_B)\right]^2}{\sum_{B: i\in B}p_Bm_i^2(\omega_B)|B|}=\sum_i\frac{\left[\sum_{\omega\in A_i}p(\omega)m_i(\omega)\right]^2}{\sum_j\sum_{\omega\in A_i\cap A_j}p(\omega)m_i^2(\omega)},
\end{equation}
where the equality holds when $m_i(\omega)=\frac{1}{\deg(\omega)}$ (i.e., $m_i(\omega_B)=\frac{1}{|B|}$), which was first shown by Cohen and Merhav \cite[Theorem 2.1]{Cohen2004}.

When $m_i(\omega)=c_i> 0$, (\ref{temp_using_for_below}) reduces to the DC bound
\begin{equation}\label{}
	P\left(\bigcup_{i=1}^N A_i\right)\ge\sum_i \frac{\left[c_i P(A_i)\right]^2}{\sum_j c_i^2 P(A_i\cap A_j)}=\sum_i\frac{P(A_i)^2}{\sum_j P(A_i\cap A_j)}=\lDC.
\end{equation}

Note that as remarked in \cite{Feng2010}, the DC bound can be seen as a special case of the lower bound
\begin{equation}\label{another_lower_bound}
	P\left(\bigcup_{i=1}^N A_i\right)\ge\frac{\left[\sum_i c_i P(A_i)\right]^2}{\sum_i\sum_j c_i^2 P(A_i\cap A_j)},
\end{equation}
when $c_i=\frac{P(A_i)}{\sum_j P(A_i\cap A_j)}$. This is because
\begin{equation}\label{}
	\begin{split}
		\frac{\left[\sum_i \left(\frac{P(A_i)}{\sum_j P(A_i\cap A_j)}\right) P(A_i)\right]^2}{\sum_i\sum_j \left(\frac{P(A_i)}{\sum_j P(A_i\cap A_j)}\right)^2 P(A_i\cap A_j)}&=\frac{\left(\sum_i\frac{P(A_i)^2}{\sum_j P(A_i\cap A_j)}\right)^2}{\sum_i\left\{\left(\frac{P(A_i)}{\sum_j P(A_i\cap A_j)}\right)^2 \sum_j  P(A_i\cap A_j)\right\}}\\
		&=\frac{\lDC^2}{\lDC}=\lDC.
	\end{split}
\end{equation}
Note that although $c_i>0$ is not assumed in (\ref{another_lower_bound}), one can always replace $c_i$ by $|c_i|$ in (\ref{another_lower_bound}) if $c_i<0$ to get a sharper bound.

However, the lower bound in (\ref{another_lower_bound}) is looser than the following two (left-most) lower bounds (which we later derive in (\ref{new_lower_bound_by_cauchy}) and (\ref{new_temptemp_for_ref_commentsGK})):
\begin{equation}\label{}
	\sum_{i=1}^N \frac{c_i^2 P(A_i)^2}{c_i\sum_{k}c_k P(A_i\cap A_k)}
	\ge \frac{\left[\sum_{i} c_i P(A_i)\right]^2}{\sum_{i}\sum_{k}c_ic_kP(A_i\cap A_k)}\ge\frac{\left[\sum_i c_i P(A_i)\right]^2}{\sum_i\sum_j c_i^2 P(A_i\cap A_j)},
\end{equation}
where $c_i>0$ for all $i$ and the last inequality can be proved using $2c_ic_j\le c_i^2+c_j^2$.
\subsection{Relation to the Gallot-Kounias bound}
By the Cauchy-Schwarz inequality, or assuming $m_i(\omega)=1$ in  (\ref{cauchy_using_m_i}), we have
\begin{equation}\label{}
	\left[\sum_{B: i\in B}f_i(B)p_B\right]\left[\sum_{B: i\in B}\frac{p_B}{f_i(B)}\right]\ge
	\left[\sum_{B: i\in B}p_B\right]^2=P(A_i)^2.
\end{equation}
Using $f_i(B)$ defined using $\bc$ in (\ref{f_def_c}) (note that $f_i(B)>0$ is equivalent to $c_i>0$ for all $i$), we have
\begin{equation}\label{}
	\left[\sum_{B: i\in B}\frac{c_ip_B}{\sum_{k\in B} c_k}\right]\left[\sum_{B: i\in B}\left(\frac{\sum_{k\in B} c_k}{c_i}\right)p_B\right]\ge P(A_i)^2.
\end{equation}
Note that
\begin{equation}\label{}
	\sum_{B: i\in B}\left(\frac{\sum_{k\in B} c_k}{c_i}\right)p_B=\frac{1}{c_i}\sum_{k=1}^N\sum_{B: i\in B, k\in B}c_k p_B=\frac{\sum_{k} c_k P(A_i\cap A_k)}{c_i}.
\end{equation}
Therefore, we have
\begin{equation}\label{}
	\left[\sum_{B: i\in B}\frac{c_ip_B}{\sum_{k\in B} c_k}\right]\left[\frac{\sum_{k} c_k P(A_i\cap A_k)}{c_i}\right]\ge P(A_i)^2.
\end{equation}
Then for all $i$,
\begin{equation}\label{temp_use}
	\sum_{B: i\in B}\frac{c_ip_B}{\sum_{k\in B} c_k}\ge \frac{c_i^2 P(A_i)^2}{c_i\sum_{k}c_k P(A_i\cap A_k)}
\end{equation}
By summing (\ref{temp_use}) over $i$, we get another new lower bound:
\begin{equation}\label{new_lower_bound_by_cauchy}
	P\left(\bigcup_{i}A_i\right)\ge\sum_{i=1}^N \frac{c_i^2 P(A_i)^2}{c_i\sum_{k}c_k P(A_i\cap A_k)}.
\end{equation}
Note that we can use Cauchy-Schwarz inequality again:
\begin{equation}\label{}
	\left[\sum_{i=1}^N \frac{c_i^2 P(A_i)^2}{c_i\sum_{k}c_k P(A_i\cap A_k)}\right]\left[\sum_{i}c_i\sum_{k}c_k P(A_i\cap A_k)\right]\ge \left[\sum_{i}c_iP(A_i)\right]^2,
\end{equation}
which yields
\begin{equation}\label{new_temptemp_for_ref_commentsGK}
	P\left(\bigcup_{i}A_i\right)\ge\sum_{i=1}^N \frac{c_i^2 P(A_i)^2}{c_i\sum_{k}c_k P(A_i\cap A_k)}
	\ge \frac{\left[\sum_{i} c_i P(A_i)\right]^2}{\sum_{i}\sum_{k}c_ic_kP(A_i\cap A_k)}.
\end{equation}
Since the above inequality holds for any positive $\bc$, we have
\begin{equation}\label{inequalities}
	P\left(\bigcup_{i}A_i\right)\ge\max_{\bc\in\mathbb{R}_+^N}\sum_{i=1}^N \frac{c_i^2 P(A_i)^2}{c_i\sum_{k}c_k P(A_i\cap A_k)}
	\ge \max_{\bc\in\mathbb{R}_+^N}\frac{\left[\sum_{i} c_i P(A_i)\right]^2}{\sum_{i}\sum_{k}c_ic_kP(A_i\cap A_k)}.
\end{equation}

One can show that by computing the partial derivative with respect to $c_i$ and set it to zero that
\begin{equation}\label{}
	\max_{\bc\in\mathbb{R}^N}\sum_{i=1}^N \frac{c_i^2 P(A_i)^2}{c_i\sum_{k}c_k P(A_i\cap A_k)}
	=\max_{\bc\in\mathbb{R}^N}\frac{\left[\sum_{i} c_i P(A_i)\right]^2}{\sum_{i}\sum_{k}c_ic_kP(A_i\cap A_k)}=:\lGK,
\end{equation}
where $\lGK$ is the Gallot-Kounias bound (see \cite{Feng2010}), and the optimal $\tilde{\bc}$ can be obtained from 
\begin{equation}
	\bSigma\tilde{\bc}=\balpha,
\end{equation}
where $\balpha=(P(A_1),P(A_2),\dots,P(A_N))^T$ and $\bSigma$ is a $N\times N$ matrix whose $(i,j)$-th element equals to $P(A_i\cap A_j)$. 
Thus, we conclude that the lower bounds in (\ref{inequalities}) are equal to the GK bound as shown in \cite{Feng2010} if $\tilde{\bc}\in\mathbb{R}_+^N$; otherwise, the lower bounds in (\ref{inequalities}) are weaker than the GK bound.

\section{New Bounds using $\boldsymbol{\{P(A_i)\}}$ and $\boldsymbol{\{\sum_j c_jP(A_i\cap A_j)\}}$}\label{sec_3}
\subsection{New Class of Lower Bounds when $\bc$ satisfies (\ref{condition1})}
\begin{theorem}\label{theorem_new_bound_iii_commentsGK}
	For any given $\bc$ that satisfies (\ref{condition1}), a new lower bound on the union probability is given by
	\begin{equation}\label{new_class_lower_bounds}
		\ProbUnion\ge\sum_{i=1}^N\ell_i(\bc)=:\lNEWIII(\bc),
	\end{equation}
	where
	\begin{equation}\label{}
		\begin{split}
			\ell_i(\bc)=P(A_i)\left(\frac{c_i}{\sum_{k\in B_1^{(i)}} c_k}+\frac{c_i}{\sum_{k\in B_2^{(i)}} c_k}\right.
			\left.-\frac{c_i\sum_{k}c_k P(A_i\cap A_k)}{P(A_i)\left(\sum_{k\in B_1^{(i)}} c_k\right)\left(\sum_{k\in B_2^{(i)}} c_k\right)}\right),
		\end{split}
	\end{equation}
	where $B_1^{(i)}$ and $B_2^{(i)}$ are subsets of $\{1,\dots,N\}$ that satisfy the following conditions.
	\begin{enumerate}
		\item If $\frac{\sum_{k}c_k P(A_i\cap A_k)}{c_iP(A_i)}\ge 0$ and $\min_{\{B: i\in B\}}\frac{\sum_{k\in B} c_k}{c_i}<0$, then
		\begin{equation}\label{solution_case1_B}
			\begin{split}
				B_1^{(i)}&=\arg\max_{\{B: i\in B\}}\frac{\sum_{k\in B} c_k}{c_i}\quad\st\quad\frac{\sum_{k\in B} c_k}{c_i}<0,\\
				B_2^{(i)}&=\arg\max_{\{B: i\in B\}}\frac{\sum_{k\in B} c_k}{c_i}.
			\end{split}
		\end{equation}
		\item If $\frac{\sum_{k}c_k P(A_i\cap A_k)}{c_iP(A_i)}\ge 0$ and $\min_{\{B: i\in B\}}\frac{\sum_{k\in B} c_k}{c_i}\ge 0$, then
		\begin{equation}\label{solution_case2_B}
			\begin{split}
				B_1^{(i)}&=\arg\max_{\{B: i\in B\}}\frac{\sum_{k\in B} c_k}{c_i}\quad\\
				&\quad\st\quad \frac{\sum_{k\in B} c_k}{c_i}\le \frac{\sum_{k}c_k P(A_i\cap A_k)}{c_iP(A_i)},\\
				B_2^{(i)}&=\arg\min_{\{B: i\in B\}}\frac{\sum_{k\in B} c_k}{c_i}\\
				&\quad\st\quad \frac{\sum_{k\in B} c_k}{c_i}\ge \frac{\sum_{k}c_k P(A_i\cap A_k)}{c_iP(A_i)}.
			\end{split}
		\end{equation}
		\item If $\frac{\sum_{k}c_k P(A_i\cap A_k)}{c_iP(A_i)}<0$ and $\frac{\sum_{k}c_k P(A_i\cap A_k)}{c_iP(A_i)}< \left\{\max_{\{B: i\in B, \frac{\sum_{k\in B} c_k}{c_i}<0\}}\frac{\sum_{k\in B} c_k}{c_i},\right\}$, then
		\begin{equation}\label{solution_case3_B}
			\begin{split}
				B_1^{(i)}&=\arg\max_{\{B: i\in B\}}\frac{\sum_{k\in B} c_k}{c_i},\quad\st\frac{\sum_{k\in B} c_k}{c_i}<0,\\
				B_2^{(i)}&=\arg\min_{\{B: i\in B\}}\frac{\sum_{k\in B} c_k}{c_i}.
			\end{split}
		\end{equation}
		\item If $\frac{\sum_{k}c_k P(A_i\cap A_k)}{c_iP(A_i)}<0$ and $\frac{\sum_{k}c_k P(A_i\cap A_k)}{c_iP(A_i)}\ge \left\{\max_{\{B: i\in B, \frac{\sum_{k\in B} c_k}{c_i}<0\}}\frac{\sum_{k\in B} c_k}{c_i}\right\},$ then
		\begin{equation}\label{solution_case4_B}
			\begin{split}
				B_1^{(i)}&=\arg\max_{\{B: i\in B\}}\frac{\sum_{k\in B} c_k}{c_i},\\
				B_2^{(i)}&=\arg\max_{\{B: i\in B\}}\frac{\sum_{k\in B} c_k}{c_i}\\
				&\quad\st\quad \frac{\sum_{k\in B} c_k}{c_i}\le \frac{\sum_{k}c_k P(A_i\cap A_k)}{c_iP(A_i)}.
			\end{split}
		\end{equation}
	\end{enumerate}
\end{theorem}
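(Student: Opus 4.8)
\noindent\emph{Proof outline.}\ The plan is to start from the Lemma's identity~\eqref{prob_union}, which writes $\ProbUnion=\sum_{i=1}^N S_i$ with $S_i:=\sum_{B\in\mathscr{B}:\,i\in B}\frac{c_i p_B}{\sum_{k\in B}c_k}$, and to bound each $S_i$ from below using only the admissible data $P(A_i)$ and $\sum_k c_kP(A_i\cap A_k)$. Interchanging summations gives $\sum_{B:\,i\in B}\bigl(\sum_{k\in B}c_k\bigr)p_B=\sum_k c_kP(A_i\cap A_k)$, while $\sum_{B:\,i\in B}p_B=P(A_i)$; hence $S_i$ is at least the optimal value $\ell_i(\bc)$ of the linear program that minimizes $\sum_{B:\,i\in B}\frac{c_i p_B}{\sum_{k\in B}c_k}$ over $\{p_B\ge0:i\in B\}$ subject to these two linear equalities --- the true atom probabilities being a feasible point, and the feasible region being bounded since $\sum p_B=P(A_i)$. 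Summing $S_i\ge\ell_i(\bc)$ over $i$ yields~\eqref{new_class_lower_bounds}, so the task is to evaluate this program and identify its value with the stated formula. (If $P(A_i)=0$ then $p_B=0$ for every $B\ni i$, so $S_i=\ell_i(\bc)=0$ and we may assume $P(A_i)>0$.)

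To solve it, divide its constraints by $c_i$ --- legitimate because~\eqref{condition1} with $B=\{i\}$ forces $c_i\neq0$ --- and set $t_B:=\bigl(\sum_{k\in B}c_k\bigr)/c_i$ (nonzero by~\eqref{condition1}, with $t_{\{i\}}=1$), $a:=P(A_i)>0$, and $\mu:=\bigl(\sum_k c_kP(A_i\cap A_k)\bigr)/\bigl(c_iP(A_i)\bigr)$, which is precisely the quantity compared against $0$ and against $\max\{s:s<0\}$ in the four case hypotheses. The program becomes: minimize $\sum_{B:\,i\in B}p_B/t_B$ over $p_B\ge0$ with $\sum p_B=a$ and $\sum t_B p_B=a\mu$. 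Having two equality constraints, it attains its minimum at a vertex supported on at most two values $t_1,t_2$ of the finite set $T:=\{t_B:i\in B\}$; inserting the unique interpolating weights and simplifying, the optimal value equals $a\bigl(\frac{1}{t_1}+\frac{1}{t_2}-\frac{\mu}{t_1 t_2}\bigr)$, which unwinds exactly to $\ell_i(\bc)$ once $t_1,t_2$ are identified with $\bigl(\sum_{k\in B_1^{(i)}}c_k\bigr)/c_i$ and $\bigl(\sum_{k\in B_2^{(i)}}c_k\bigr)/c_i$. It thus remains to show the pairs prescribed in~\eqref{solution_case1_B}--\eqref{solution_case4_B} are an optimal choice of $(t_1,t_2)$.

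For the inequality one only needs a dual-feasible certificate: the secant line $\ell(t):=\frac{1}{t_1}+\frac{1}{t_2}-\frac{t}{t_1 t_2}$ of $t\mapsto1/t$ through $(t_1,1/t_1)$ and $(t_2,1/t_2)$ gives $\sum_{B:\,i\in B}p_B/t_B\ge\sum_{B:\,i\in B}p_B\,\ell(t_B)=a\,\ell(\mu)=a\bigl(\frac{1}{t_1}+\frac{1}{t_2}-\frac{\mu}{t_1 t_2}\bigr)$ as soon as $\ell(t)\le 1/t$ on $T$, and since
\[
\frac{1}{t}-\ell(t)=\frac{(t-t_1)(t-t_2)}{t\,t_1 t_2},
\]
this reduces to the requirement that $(t-t_1)(t-t_2)$ and $t\,t_1 t_2$ carry the same sign for every $t\in T$. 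In each case this is a one-line check using only $\min T\le t\le\max T$ and, for negative $t$, $t\le\max\{s\in T:s<0\}$: e.g.\ in case~1 ($\mu\ge0$, $T$ has a negative element) we have $t_1=\max\{s\in T:s<0\}<0$ and $t_2=\max T>0$, so for $t>0$, $t\,t_1 t_2<0$ while $t-t_1>0$ and $t-t_2\le0$, and for $t<0$, $t\,t_1 t_2>0$ while $t-t_2<0$ and $t-t_1\le0$; the remaining cases are analogous. The four-way split is dictated by which of the relevant maxima/minima over subsets of $T$ exist, and by where $\mu$ lies relative to $0$ and to $\max\{s\in T:s<0\}$.

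The main obstacle is optimality rather than validity of the prescribed pairs --- equivalently, that this secant is the \emph{highest} affine minorant of $1/t$ at $\mu$ over $T$. This is delicate because $1/t$ is convex on $(0,\infty)$ but concave on $(-\infty,0)$, so the best secant need not be the chord straddling $\mu$: in the presence of negative values it typically hinges on the negative element of $T$ closest to $0$ together with $\max T$ or $\min T$, which is exactly the bifurcation recorded by the four cases. I would establish optimality by differentiating $a\bigl(\frac{1}{t_1}+\frac{1}{t_2}-\frac{\mu}{t_1 t_2}\bigr)$ in $t_1$ and $t_2$ --- the partials are $\frac{\mu-t_2}{t_1^2 t_2}$ and $\frac{\mu-t_1}{t_1 t_2^2}$, whose signs are constant within each sign-regime of $(t_1,t_2)$ --- and pushing each coordinate monotonically to the extreme of $T$ it may reach in that regime, then comparing the finitely many regimes; alternatively one checks complementary slackness, using that the two-point interpolant is primal feasible precisely when $t_1\le\mu\le t_2$, which holds in every case because $\mu$, coming from actual probabilities, is a convex combination of elements of $T$. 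Either way the remaining work is purely the bookkeeping of which regime dominates in each of the four cases.
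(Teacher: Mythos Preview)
Your proposal is correct and follows essentially the same route as the paper: set up the same per-$i$ linear program in the atom masses $\{p_B:i\in B\}$ with the two equality constraints coming from $P(A_i)$ and $\sum_k c_kP(A_i\cap A_k)$, reduce to a two-point support with value $P(A_i)\bigl(\tfrac{1}{b_1}+\tfrac{1}{b_2}-\tfrac{b}{b_1b_2}\bigr)$, and then select the optimal $(b_1,b_2)$ by the partial-derivative/monotonicity analysis that drives the four-case split (the paper cites \cite{Yang2014} for this step, whereas you sketch it directly). Your secant-line dual certificate and the identity $\tfrac{1}{t}-\ell(t)=\tfrac{(t-t_1)(t-t_2)}{t\,t_1t_2}$ are a nice way to verify validity of each prescribed pair that the paper does not make explicit, but the remainder of the argument---including the partials $\tfrac{\mu-t_2}{t_1^2 t_2}$, $\tfrac{\mu-t_1}{t_1 t_2^2}$ and the case bookkeeping---matches the paper's proof.
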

\begin{proof}
Note that for the third and fourth cases, under the condition $\frac{\sum_{k}c_k P(A_i\cap A_k)}{c_iP(A_i)}<0$, the elements of $\bc$ cannot be all positive or negative, so the set $\{B: i\in B, \frac{\sum_{k\in B} c_k}{c_i}<0\}$ is not empty. Therefore, the solutions of $B_1^{(i)}$ and $B_2^{(i)}$ always exist. The proof is given in Appendix \ref{TBA2}.
\end{proof}

\begin{remark}[The new bound ${\lNEWIII(\bc)}$  v.s. the GK bound ${\lGK}$] 	For any $\bc\in\mathbb{R}_+^N$, we have these relations between different lower bounds:
\begin{equation}\label{}
\begin{split}
&\lNEWIII(\bc)\ge  \sum_{i=1}^N \frac{c_i^2 P(A_i)^2}{c_i\sum_{k}c_k P(A_i\cap A_k)}\\
&\ge \frac{\left[\sum_{i} c_i P(A_i)\right]^2}{\sum_{i}\sum_{k}c_ic_kP(A_i\cap A_k)}\ge\frac{\left[\sum_i c_i P(A_i)\right]^2}{\sum_i\sum_j c_i^2 P(A_i\cap A_j)}.
\end{split}
\end{equation}
Note that if $\tilde{\bc}$ obtained by the GK bound satisfies $\tilde{\bc}\in\mathbb{R}_+^N$, then $\lNEWIII(\tilde{\bc})\ge\lGK$. This can be proven by first noting that the two constraints of (\ref{l_i}) and the Cauchy-Schwarz inequality yield (\ref{temp_use}). Then by (\ref{inequalities}), we can get that $\lGK$ is a lower bound of $\lNEWIII(\tilde{\bc})$.	
\end{remark}

\begin{remark}[The new bound ${\lNEWIII(\bc)}$ v.s. the KAT bound ${\lKAT}$]
	One can easily verify that $\lNEWIII(\kappa\bone)=\lKAT$, where $\bone$ is the all-one vector of size $N$ and $\kappa$ is any non-zero constant.
\end{remark}

\begin{lemma}\label{lemma_approx} When $\bc\in\mathbb{R}^N_+$, the lower bound $\lNEWIII(\bc)$ can be computed in pseudo-polynomial time, and can be arbitrarily closely approximated by an algorithm running in polynomial time.
\end{lemma}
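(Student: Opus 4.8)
The plan is to reduce the evaluation of $\lNEWIII(\bc)$, for $\bc\in\mathbb{R}^N_+$, to a family of subset-sum problems and then invoke the classical dynamic-programming and approximation machinery for subset sum. First I would note that when every $c_k>0$ we have $\sum_{k\in B}c_k>0$ for all $B\in\mathscr{B}$ and $\frac{\sum_k c_k P(A_i\cap A_k)}{c_iP(A_i)}\ge 0$ for every $i$ (we may assume $P(A_i)>0$, else the term $\ell_i$ is simply dropped), so only Case~2 of Theorem~\ref{theorem_new_bound_iii_commentsGK} ever occurs. Writing $t_i:=\frac{\sum_k c_k P(A_i\cap A_k)}{P(A_i)}$, Case~2 says: $B_1^{(i)}$ is a set $B\ni i$ whose weight $\sum_{k\in B}c_k$ is as large as possible subject to being $\le t_i$, while $B_2^{(i)}$ is a set $B\ni i$ whose weight is as small as possible subject to being $\ge t_i$; moreover $\ell_i(\bc)$ is an explicit smooth function of the two weights $s_1:=\sum_{k\in B_1^{(i)}}c_k$ and $s_2:=\sum_{k\in B_2^{(i)}}c_k$ and of the supplied data. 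Since $B=\{i\}$ is feasible for the first problem (as $t_i\ge c_i$) and $B=\{1,\dots,N\}$ for the second (as $t_i\le\sum_k c_k$), both optima are attained.

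For the pseudo-polynomial claim I would assume, after clearing denominators, that $\bc\in\mathbb{Z}^N_+$, and put $C:=\sum_k c_k$. For each fixed $i$, the standard subset-sum dynamic program over the items $\{c_k:k\ne i\}$ computes in $O(NC)$ time the Boolean array indicating which values in $\{0,\dots,C\}$ are of the form $\sum_{k\in S}c_k$ for some $S\subseteq\{1,\dots,N\}\setminus\{i\}$; adding $c_i$ to each such value and scanning once then yields $s_1$ and $s_2$ (with witnessing subsets if wanted) in $O(C)$ additional time. Repeating over $i$ costs $O(N^2C)$, after which each $\ell_i(\bc)$—and hence $\lNEWIII(\bc)$—is produced by $O(N)$ arithmetic operations from the supplied data $\{P(A_i)\}$ and $\{\sum_k c_k P(A_i\cap A_k)\}$. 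The whole procedure is polynomial in $N$ and $C$, i.e.\ pseudo-polynomial.

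For the polynomial-time approximation I would replace the exact dynamic program by the classical fully polynomial approximation scheme for subset sum (list-trimming). Fix a target accuracy $\eta>0$ and a precision parameter $\epsilon>0$. For each $i$, in time $\mathrm{poly}(N,1/\epsilon)$ the scheme returns a feasible $B_1^{(i)}$ with $s_1-\epsilon C\le\sum_{k\in B_1^{(i)}}c_k\le t_i$; the ``closest-from-above'' problem defining $B_2^{(i)}$ is reduced to the same type by complementation inside $\{1,\dots,N\}\setminus\{i\}$, giving a feasible $B_2^{(i)}$ with $t_i\le\sum_{k\in B_2^{(i)}}c_k\le s_2+\epsilon C$. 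On the region $s_1\in[c_i,t_i]$, $s_2\in[t_i,C]$ one checks, using $s_1,s_2\ge c_i>0$, that $|\partial\ell_i/\partial s_1|$ and $|\partial\ell_i/\partial s_2|$ are bounded by an explicit constant $L_i$ depending only on the instance; hence $|\widehat{\ell}_i-\ell_i(\bc)|\le 2L_i\epsilon C$ and the computed value differs from $\lNEWIII(\bc)$ by at most $2\epsilon C\sum_i L_i$. Taking $\epsilon=\eta/(2C\sum_i L_i)$ brings the total error below $\eta$; since $C\sum_i L_i$ is bounded by a polynomial in the input magnitudes, $1/\epsilon$—and therefore the total running time—is polynomial in $N$, the input size, and $1/\eta$.

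The step I expect to be the main obstacle is the approximation argument: one must keep the approximate subsets feasible for the one-sided constraints $\sum_{k\in B}c_k\le t_i$ and $\sum_{k\in B}c_k\ge t_i$ (so the one-sided subset-sum FPTAS, together with the complementation trick, is used rather than a generic knapsack scheme), and one must establish a uniform Lipschitz bound for $\ell_i$ in $(s_1,s_2)$ with constants polynomial in the input size, so that only a polynomially small FPTAS precision is needed. The pseudo-polynomial part, once the reduction in the first paragraph is in place, is a routine invocation of subset-sum dynamic programming.
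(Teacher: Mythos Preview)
Your proposal is correct and follows the same high-level route as the paper: for $\bc\in\mathbb{R}^N_+$ only Case~2 of Theorem~\ref{theorem_new_bound_iii_commentsGK} applies, the determination of $B_1^{(i)},B_2^{(i)}$ is recognized as a subset-sum (value-equals-weight knapsack) instance, the standard dynamic program gives the pseudo-polynomial claim, and the subset-sum FPTAS gives the polynomial-time approximation. The one substantive difference lies in how the FPTAS error on the subset-sum values is converted into an error on $\ell_i(\bc)$. You use an additive guarantee together with a Lipschitz bound on $\ell_i$ as a function of $(s_1,s_2)$ to obtain a two-sided estimate $|\widehat\ell_i-\ell_i(\bc)|\le 2L_i\epsilon C$. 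The paper instead exploits the multiplicative $(1-\epsilon)$ guarantee and the monotonicity of $\ell_i$ in $(b_1,b_2)$ (the signs of the partial derivatives in \eqref{partial_b1_b2}) to define adjusted values $b_1^{(i)}\ge\sum_{k\in B_1^{(i)}}c_k$ and $b_2^{(i)}\le\sum_{k\in B_2^{(i)}}c_k$, yielding a one-sided estimate $\ell_i^L(\bc,\epsilon)\le\ell_i(\bc)$ with $\ell_i^L\to\ell_i$ as $\epsilon\to 0^+$. Both arguments suffice for the lemma as stated, but the paper's variant has the additional feature that $\sum_i\ell_i^L(\bc,\epsilon)$ is itself a valid lower bound on $P\bigl(\bigcup_i A_i\bigr)$ at every precision level, not merely an approximation to $\lNEWIII(\bc)$; this is why the paper can write $\ProbUnion\ge\sum_i\ell_i(\bc)\ge\sum_i\ell_i^L(\bc,\epsilon)$ rather than just an approximation statement.
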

\begin{proof}
See Appendix \ref{Pf_lemma_approx}.
\end{proof}

\begin{corollary}{(New class of upper bounds ${\uNEWIV(\bc)}$):}\label{corollary1}
	We can derive an upper bound for any given $\bc\in\mathbb{R}_+^N$ by
	\begin{equation}\label{upper3}
		\begin{split}
			P\left(\bigcup_i A_i\right)&\le\left(\frac{1}{\min_k c_k}+\frac{1}{\sum_k c_k}\right)\sum_i c_i P(A_i)\\
			&\quad-\frac{1}{(\min_k c_k)\sum_k c_k}\sum_i\sum_k c_i c_k P(A_i\cap A_k)=:\uNEWIV(\bc).
		\end{split}
	\end{equation}
	The proof is given in Appendix \ref{Pf_corollary1}.	
	According to the results from randomly generated $\bc$, it is conjectured the optimal upper bound in this class is achieved at $\bc=\kappa\bone$ where $\kappa$ is any non-zero constant.
\end{corollary}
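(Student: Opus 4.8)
The plan is to apply the novel identity (\ref{prob_union}) of the Lemma directly: since $\bc\in\mathbb{R}_+^N$ has strictly positive entries, $\sum_{k\in B}c_k>0$ for every $B\in\mathscr{B}$, so condition (\ref{condition1}) holds and
\[
\ProbUnion=\sum_{i=1}^N\sum_{B:\,i\in B}\frac{c_ip_B}{s_B},\qquad s_B:=\sum_{k\in B}c_k .
\]
Write $m:=\min_k c_k>0$ and $C:=\sum_k c_k$. If $i\in B$, then $c_i$ is one of the positive summands of $s_B$ and $B\subseteq\{1,\dots,N\}$, so $m\le c_i\le s_B\le C$; hence every $s_B$ appearing in the $i$-th inner sum lies in $[m,C]$.

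The crux is the affine (secant-line) majorant of the convex function $s\mapsto 1/s$ on $[m,C]$:
\[
\frac{1}{s}\le\frac{1}{m}+\frac{1}{C}-\frac{s}{mC}\qquad(m\le s\le C),
\]
which holds because the right side minus the left side equals $-(s-m)(s-C)/(mCs)$, a quantity that is nonnegative precisely on $[m,C]$. Applying this with $s=s_B$, multiplying by the nonnegative weight $c_ip_B$, summing over $\{B:i\in B\}$, and using the identities $\sum_{B:\,i\in B}p_B=P(A_i)$ and $\sum_{B:\,i\in B}s_Bp_B=\sum_k c_kP(A_i\cap A_k)$ --- the latter being precisely the computation already made in Section~\ref{sec_2} in the derivation of the Gallot-Kounias bound --- gives
\[
\sum_{B:\,i\in B}\frac{c_ip_B}{s_B}\le c_i\Bigl(\frac1m+\frac1C\Bigr)P(A_i)-\frac{c_i}{mC}\sum_{k}c_kP(A_i\cap A_k).
\]
Summing over $i=1,\dots,N$ and collecting the coefficients of $\sum_i c_iP(A_i)$ and of $\sum_i\sum_k c_ic_kP(A_i\cap A_k)$ then yields exactly the asserted bound (\ref{upper3}) with $\uNEWIV(\bc)$ on the right-hand side.

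I expect no deep obstacle here. The only steps that require care are (i) confirming that every relevant $s_B$ is trapped in $[m,C]$, which is what legitimizes the affine majorant term by term, and (ii) the routine algebra that turns the per-index estimate into the closed form $\uNEWIV(\bc)$. I would also point out that this construction is the natural ``dual'' of the two-point lower bound behind Theorem~\ref{theorem_new_bound_iii_commentsGK}: there one chooses subsets $B_1^{(i)},B_2^{(i)}$ realizing a supporting line for $s\mapsto 1/s$ from below, whereas here one uses the extreme endpoints $m$ and $C$ together with a secant line from above; replacing $m$ by the index-dependent lower bound $c_i$ would sharpen the estimate but destroy the uniform closed form, which presumably explains why (\ref{upper3}) is stated with $\min_k c_k$.
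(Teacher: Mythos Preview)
Your proof is correct. The paper's own argument (Appendix~\ref{Pf_corollary1}) reaches (\ref{upper3}) by \emph{maximizing} the linear program~(\ref{l_i}) that underlies Theorem~\ref{theorem_new_bound_iii_commentsGK}: the maximum of $\sum_{B:i\in B}p_B/s_B$ under the two moment constraints is attained by a two-point distribution at the extreme admissible values of $s_B$, and inserting $\min_k c_k$ and $\sum_k c_k$ for those endpoints yields the stated formula. Your route is the same convexity mechanism stripped of the LP scaffolding: you apply the chord inequality $1/s\le 1/m+1/C-s/(mC)$ on $[m,C]$ pointwise to each term of the identity~(\ref{prob_union}) and sum. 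This is shorter and makes the role of convexity explicit, whereas the LP formulation emphasizes the duality with the lower-bound construction. Your closing observation is also correct: for $B\ni i$ the true minimum of $s_B$ is $c_i$, not $m$, so the secant on $[c_i,C]$ would tighten each summand to $P(A_i)+\bigl(c_iP(A_i)-\sum_k c_kP(A_i\cap A_k)\bigr)/C$; the corollary relaxes to the uniform endpoint $m$ precisely to obtain a closed form in the aggregate quantities $\sum_i c_iP(A_i)$ and $\sum_{i,k}c_ic_kP(A_i\cap A_k)$.
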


\subsection{New Class of Lower Bounds when $\bc\in\mathbb{R}^N_+$}

We only consider $\bc\in\mathbb{R}^N_+$ in this subsection. A new class of lower bounds, $\lNEWIV$, is given in the following theorem.
\begin{theorem}\label{theoremA}
	Defining $\mathscr{B}^-=\mathscr{B}\setminus \{1,\dots,N\}$, $\tilde{\gamma}_i:=\sum_k c_k P(A_i\cap A_k)$, $\tilde{\alpha}_i:=P(A_i)$ and
	\begin{equation}\label{def_tilde_delta}
		\tilde{\delta}:=\max_{i}\left[\frac{\tilde{\gamma}_i-\left(\sum_k c_k-\min_k c_k\right)\tilde{\alpha}_i}{\min_k c_k}\right]^+,
	\end{equation}
	where $\bc\in\mathbb{R}^N_+$, another class of lower bounds is given by
	\begin{equation}\label{new_class_of_lower_bounds_as_func_of_c}
		\ProbUnion\ge\tilde{\delta}+\sum_{i=1}^N\ell_i'(\bc,\tilde{\delta})=:\lNEWIV(\bc),
	\end{equation}
	where
	\begin{equation}\label{solution_of_l_i_another}
		\begin{split}
			\ell_i'(\bc,x)=&\left[P(A_i)-x\right]\cdot\\
			&\left(\frac{c_i}{\sum_{k\in B_1^{(i)}} c_k}+\frac{c_i}{\sum_{k\in B_2^{(i)}} c_k}-\frac{c_i\sum_{k}c_k \left[P(A_i\cap A_k)-x\right]}{\left[P(A_i)-x\right]\left(\sum_{k\in B_1^{(i)}} c_k\right)\left(\sum_{k\in B_2^{(i)}} c_k\right)}\right),
		\end{split}
	\end{equation}
	and
	\begin{equation}\label{solution_case2_B_improved}
		\begin{split}
			B_1^{(i)}&=\arg\max_{\{B\in\mathscr{B}^-: i\in B\}}\frac{\sum_{k\in B} c_k}{c_i}\quad\\
			&\st\quad \frac{\sum_{k\in B} c_k}{c_i}\le \frac{\sum_{k}c_k \left[P(A_i\cap A_k)-x\right]}{c_i\left[P(A_i)-x\right]},\\
			B_2^{(i)}&=\arg\min_{\{B\in\mathscr{B}^-: i\in B\}}\frac{\sum_{k\in B} c_k}{c_i}\quad\\
			&\st\quad \frac{\sum_{k\in B} c_k}{c_i}\ge \frac{\sum_{k}c_k \left[P(A_i\cap A_k)-x\right]}{c_i\left[P(A_i)-x\right]}.
		\end{split}
	\end{equation}
\end{theorem}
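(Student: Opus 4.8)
The plan is to follow the same template that produced $\lNEWIII$, but to first set aside the contribution of the unique atom that lies in all of $A_1,\dots,A_N$ and then to exploit that its probability is not arbitrary. Write $\delta:=p_{\{1,\dots,N\}}$ and $S:=\sum_k c_k$. The first step is to bound $\delta$ from below: for every $B\in\mathscr{B}^-$ with $i\in B$, positivity of $\bc$ forces $\sum_{k\in B}c_k\le S-\min_k c_k$, so, summing $\bigl(\sum_{k\in B}c_k\bigr)p_B$ over $\{B\in\mathscr{B}^-:i\in B\}$ and using $\sum_{B\in\mathscr{B}^-:i\in B}p_B=P(A_i)-\delta$ together with $\tilde\gamma_i=S\delta+\sum_{B\in\mathscr{B}^-:i\in B}\bigl(\sum_{k\in B}c_k\bigr)p_B$, one obtains $\tilde\gamma_i\le S\delta+(S-\min_k c_k)(\tilde\alpha_i-\delta)$, i.e. $\delta\ge[\tilde\gamma_i-(S-\min_k c_k)\tilde\alpha_i]/\min_k c_k$. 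Combined with $\delta\ge0$ this gives $\delta\ge\tilde\delta$; moreover $\delta\le\min_i P(A_i)$, since the full atom lies in each $A_i$.

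Second, since $\sum_i c_i/S=1$, the full atom contributes exactly $\delta$ to the double sum in~(\ref{prob_union}), so
\[
\ProbUnion=\delta+\sum_{i=1}^N\ \sum_{B\in\mathscr{B}^-:\,i\in B}\frac{c_i\,p_B}{\sum_{k\in B}c_k}.
\]
Fix $i$. The nonnegative vector $(p_B)_{B\in\mathscr{B}^-,\,i\in B}$ has total mass $P(A_i)-\delta$ and level-weighted mass $\sum_B\bigl(\sum_{k\in B}c_k\bigr)p_B=\tilde\gamma_i-S\delta=\sum_k c_k[P(A_i\cap A_k)-\delta]$, so the $i$th inner sum is at least the minimum of $\sum_B\frac{c_i q_B}{\sum_{k\in B}c_k}$ over all nonnegative $(q_B)_{B\in\mathscr{B}^-,\,i\in B}$ with those two moments. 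This is exactly the two-moment linear program solved in the proof of Theorem~\ref{theorem_new_bound_iii_commentsGK}, now taken over the restricted family $\{B\in\mathscr{B}^-:i\in B\}$ and with $P(A_i),P(A_i\cap A_k)$ replaced by $P(A_i)-\delta,P(A_i\cap A_k)-\delta$. Since $\bc\in\mathbb R^N_+$, all ratios $\sum_{k\in B}c_k/c_i$ are positive, and the shifted target average level is nonnegative (indeed $\tilde\gamma_i-S\delta=\sum_{B\in\mathscr{B}^-:i\in B}\bigl(\sum_{k\in B}c_k\bigr)p_B\ge0$ and $P(A_i)-\delta\ge0$), so one is in ``Case~2'' of that theorem: the minimum equals the two-point value $\ell_i'(\bc,\delta)$ attained at the pair $(B_1^{(i)},B_2^{(i)})$ of~(\ref{solution_case2_B_improved}) with $x=\delta$. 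Summing over $i$ yields $\ProbUnion\ge h(\delta)$, where $h(x):=x+\sum_{i=1}^N\ell_i'(\bc,x)$.

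The third and decisive step is to show that $h$ is nondecreasing, whence $\ProbUnion\ge h(\delta)\ge h(\tilde\delta)=\lNEWIV(\bc)$. On any $x$-interval on which the optimal pairs are constant, writing $s^{(i)}_j:=\sum_{k\in B^{(i)}_j}c_k$, expression~(\ref{solution_of_l_i_another}) becomes
\[
\ell_i'(\bc,x)=c_i\Bigl(\frac1{s^{(i)}_1}+\frac1{s^{(i)}_2}\Bigr)\bigl(P(A_i)-x\bigr)-\frac{c_i}{s^{(i)}_1 s^{(i)}_2}\bigl(\tilde\gamma_i-Sx\bigr),
\]
an affine function of $x$ with slope $c_i\bigl(S-s^{(i)}_1-s^{(i)}_2\bigr)\big/\bigl(s^{(i)}_1 s^{(i)}_2\bigr)$. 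Because $B^{(i)}_1,B^{(i)}_2\in\mathscr{B}^-$ are proper subsets of $\{1,\dots,N\}$ and $\bc>0$, we have $s^{(i)}_1,s^{(i)}_2<S$, so $(S-s^{(i)}_1)(S-s^{(i)}_2)>0$, i.e. $S^2-S(s^{(i)}_1+s^{(i)}_2)+s^{(i)}_1 s^{(i)}_2>0$, i.e. the slope of $\ell_i'(\bc,\cdot)$ exceeds $-c_i/S$. Hence every affine piece of $h$ has slope strictly above $1-\sum_i c_i/S=0$; as $h$ is continuous (the optimal value of an LP whose right-hand side varies affinely with $x$), $h$ is nondecreasing, which completes the argument.

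I expect the point requiring the most care to be the implicit claim that the programs defining $\ell_i'(\bc,x)$ remain feasible for all $x\in[\tilde\delta,\delta]$, so that $h$ is well defined and the Case-2 formula applies throughout: the relevant average level $\bar y_i(x):=[\tilde\gamma_i-Sx]/[P(A_i)-x]$ is monotone in $x$ because $\tilde\gamma_i-SP(A_i)=\sum_k c_k[P(A_i\cap A_k)-P(A_i)]\le0$, it is a convex combination of the available levels $\sum_{k\in B}c_k$ (hence in the feasible range) at $x=\delta$, and the precise form of $\tilde\delta$ in~(\ref{def_tilde_delta}) is exactly what keeps $\bar y_i(\tilde\delta)$ from exceeding the largest available level; the degenerate case $P(A_i)=\delta$ is absorbed by continuity. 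Everything else is bookkeeping or a direct appeal to the Case-2 analysis of Theorem~\ref{theorem_new_bound_iii_commentsGK}, so the only genuinely new ingredient beyond that theorem is the monotonicity of $h$, which collapses to the one-line inequality $(S-s^{(i)}_1)(S-s^{(i)}_2)>0$.
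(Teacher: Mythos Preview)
Your argument is correct and follows essentially the same route as the paper: split off the full atom $p_{\{1,\dots,N\}}$, solve the two-moment LP over $\mathscr{B}^-$ to obtain $\ell_i'(\bc,x)$, and then show $h(x)=x+\sum_i\ell_i'(\bc,x)$ is nondecreasing via the identity $(S-s_1^{(i)})(S-s_2^{(i)})>0$, which is exactly the paper's derivative computation. Your exposition is in fact slightly more explicit than the paper's in two places---the direct verification that the true $\delta$ satisfies $\delta\ge\tilde\delta$, and the discussion of why the Case-2 LP remains feasible on $[\tilde\delta,\delta]$---but the underlying ideas coincide.
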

\begin{proof}
Let $x=p_{\{1,2,\dots,N\}}$ and
consider $\sum_{i}\ell_i'(\bc,x)+x$ as a new lower bound where
where $\ell_i'(\bc,x)$  equals to the objective value of the problem
\begin{equation}\label{l_i_another}
	\begin{split}
		&\min_{\{p_B: i\in B, B\in\mathscr{B}^-\}}\sum_{B: i\in B, B\in\mathscr{B}^-}\frac{c_ip_B}{\sum_{k\in B} c_k}\\
		&\st  \sum_{B: i\in B, B\in\mathscr{B}^-}p_B=P(A_i)-x,\\
		&\quad  \sum_{B: i\in B, B\in\mathscr{B}^-}\left(\frac{\sum_{k\in B} c_k}{c_i}\right)p_B=\frac{1}{c_i}\sum_{k}c_k \left[P(A_i\cap A_k)-x\right],\\
		&\qquad p_B\ge 0,\quad \textrm{for all}\quad B\in\mathscr{B}^-\quad\textrm{such that}\quad i\in B.
	\end{split}
\end{equation}
The solution of (\ref{l_i_another}) exists if and only if
\begin{equation}\label{}
	\min_k c_k\le\frac{\tilde{\gamma}_i-(\sum_k c_k)x}{\tilde{\alpha_i}-x}\le\sum_kc_k-\min_k c_k.
\end{equation}
Therefore, the new lower bound can be written as
\begin{equation}\label{improved_lower_bound_new4}
	\begin{split}
		&\min_x\left[x+\sum_{i=1}^N\ell_i'(\bc,x)\right]\quad\st\quad\\
		& \left[\frac{\tilde{\gamma}_i-\left(\sum_k c_k-\min_k c_k\right)\tilde{\alpha}_i}{\min_k c_k}\right]^+\le x\le\frac{\tilde{\gamma}_i-(\min_k c_k)\tilde{\alpha}_i}{\sum_k c_k-\min_k c_k}, \forall i.
	\end{split}
\end{equation}

We can prove that the objective function of (\ref{improved_lower_bound_new4}) is non-decreasing with $x$. Therefore, defining $\tilde{\delta}$ as in (\ref{def_tilde_delta}),
the new lower bound can be written as (\ref{new_class_of_lower_bounds_as_func_of_c})
where $\ell_i'(\bc,\tilde{\delta})$ can be obtained by solving (\ref{l_i_another}), which is given in (\ref{solution_of_l_i_another}). We refer to Appendix \ref{TBA} for more details of the proof.
\end{proof}

\begin{remark}[${\lNEWIV(\bc)}$ v.s. ${\lNEWIII(\bc)}$]
Note that $\lNEWIII(\bc)=\sum_{i=1}^N \ell_i(\bc)$ where $\ell_i(\bc)$ is the solution of (\ref{l_i}). The optimal variable $p_{\{1,\dots,N\}}$ in (\ref{l_i}) is not required to be the same for each $\ell_i(\bc), i=1,\dots,N$. The lower bound $\lNEWIV(\bc)$, however, is the solution of the same problem as for $\lNEWIII(\bc)$ with the additional constraint that the optimal variable $p_{\{1,\dots,N\}}$ in (\ref{l_i}) has the same value for each $\ell_i(\bc), i=1,\dots,N$. Therefore, if $\bc\in\mathbb{R}^N_+$, $\lNEWIII(\bc)$ is the solution of a relaxed problem to the problem for obtaining $\lNEWIV(\bc)$; thus $\lNEWIV(\bc)\ge\lNEWIII(\bc)$.
Also, since $\bc\in\mathbb{R}^N_+$, the solution of (\ref{solution_case2_B_improved}) can be computed in pseudo-polynomial time and has a polynomial-time approximation algorithm.
\end{remark}

\begin{remark}[The new bound ${\lNEWIV(\bc)}$ v.s. the YAT-II bound ${\lNEWII}$]
	One can easily verify that $\lNEWIV(\kappa\bone)=\lNEWII$, where $\bone$ is the all-one vector of size $N$ and $\kappa$ is any non-zero constant.
\end{remark}	

\begin{corollary}{(Improved class of upper bounds ${\uNEWV(\bc)}$):}\label{corollary2}
	We can improve the upper bound $\uNEWIV(\bc)$ in (\ref{upper3}) by 
	\begin{equation}\label{upper4}
		\begin{split}
			P\left(\bigcup_i A_i\right) & \le \min_i\left\{\frac{\sum_k c_k P(A_i\cap A_k)-(\min_k c_k) P(A_i)}{\sum_k c_k-\min_k c_k}\right\} \\
			&+\left(\frac{1}{\min_k c_k}+\frac{1}{\sum_k c_k-\min_k c_k}\right)\sum_i c_i P(A_i)\\
			&-\frac{1}{(\min_k c_k)(\sum_k c_k-\min_k c_k)}\sum_i\sum_k c_i c_k P(A_i\cap A_k),\\
			&=:\uNEWV(\bc).
		\end{split}
	\end{equation}
	Note that the upper bound $\uNEWV(\bc)$ in (\ref{upper4}) is always sharper than $\uNEWIV$ in (\ref{upper3}). The proof is given in Appendix \ref{Pf_corollary2}. According to numerical examples using randomly generated $\bc$, it is conjectured the optimal upper bound in this class is achieved at $\bc=\kappa\bone$, where $\kappa$ is any non-zero constant.
\end{corollary}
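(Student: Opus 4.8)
The plan is to apply to the upper bound $\uNEWIV$ the same ``split off the full-intersection atom'' device that produces $\lNEWIV$ from $\lNEWIII$. Write $S=\sum_k c_k$, $m=\min_k c_k$, $\gamma_i=\sum_k c_k P(A_i\cap A_k)$, and $x=p_{\{1,\dots,N\}}$, the mass of the full-intersection atom. Starting from the identity (\ref{prob_union}), I would peel the full atom out of each inner sum,
\[
\ProbUnion=\sum_{i=1}^N\left(\frac{c_i x}{S}+\sum_{B\in\mathscr{B}^-:\,i\in B}\frac{c_i p_B}{\sum_{k\in B}c_k}\right),
\]
and record the two ``moment'' identities obtained by summing $p_B$ and $\bigl(\sum_{k\in B}c_k\bigr)p_B$ over $\{B\in\mathscr{B}^-:i\in B\}$, namely $\sum p_B=P(A_i)-x$ and $\sum\bigl(\sum_{k\in B}c_k\bigr)p_B=\gamma_i-Sx$.

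Next I would bound $s\mapsto c_i/s$ on each term by its secant (chord) line over the interval $[m,\,S-m]$. This is the step where $-\min_k c_k$ enters: for $B\in\mathscr{B}^-$ with $i\in B$ one has $m\le c_i\le\sum_{k\in B}c_k\le S-m$, since $B$ omits at least one index and hence leaves out at least $m$ from the total $S$; and since $c_i/s$ is convex it lies below the chord through $(m,c_i/m)$ and $(S-m,\,c_i/(S-m))$ on that interval, which simplifies to $\tfrac{c_i}{m}+\tfrac{c_i}{S-m}-\tfrac{c_i s}{m(S-m)}$. Substituting this and using the two moment identities turns the $i$th inner sum into an affine expression in $x$, $P(A_i)$ and $\gamma_i$. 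Summing over $i$ and collecting the $x$-terms, the coefficient of $x$ collapses to $\sum_i c_i/S=1$ (because $\tfrac{S}{m(S-m)}-\tfrac1m-\tfrac1{S-m}=0$), leaving $\ProbUnion\le x+\bigl(\tfrac1m+\tfrac1{S-m}\bigr)\sum_i c_i P(A_i)-\tfrac{1}{m(S-m)}\sum_i\sum_k c_i c_k P(A_i\cap A_k)$. Finally, $\sum_{k\in B}c_k\ge m$ applied to the second moment identity gives $x\le\tfrac{\gamma_i-mP(A_i)}{S-m}$ for every $i$ (the right side is $\ge0$ because the $k=i$ term of $\gamma_i$ already contributes $c_iP(A_i)\ge mP(A_i)$, consistently with $x\ge0$); taking the minimum over $i$ and plugging in gives exactly (\ref{upper4}).

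For the assertion that $\uNEWV(\bc)$ is always sharper than $\uNEWIV(\bc)$, I would compare the two closed forms directly. Setting $d_i:=\gamma_i-mP(A_i)\ge0$ and using $\sum_i c_i=S$, a short simplification gives
\[
\uNEWV(\bc)-\uNEWIV(\bc)=\frac{1}{S-m}\left(\min_i d_i-\frac{\sum_i c_i d_i}{\sum_i c_i}\right)\le0,
\]
the inequality holding because the $c_i$-weighted average of the $d_i$ is at least their minimum. Hence $\uNEWV(\bc)\le\uNEWIV(\bc)$, which is precisely what ``sharper'' means for upper bounds.

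The main difficulty here is bookkeeping rather than a genuine obstacle: one must get the interval $[m,S-m]$ right (i.e. justify that dropping the full atom is exactly what caps $\sum_{k\in B}c_k$ at $S-m$), carry the affine-in-$x$ algebra carefully enough to watch the $x$-coefficient cancel to $1$, and dispose of the degenerate cases --- $N=1$; an index $i$ with $P(A_i)=x$, so the $\mathscr{B}^-$ part of its sum is empty and the chord bound reads $0\le0$; and the possibility that $\min_i\tfrac{\gamma_i-mP(A_i)}{S-m}$ vanishes. Everything else reduces to the convexity of $1/s$ together with the two moment identities, both already used for Theorems \ref{theorem_new_bound_iii_commentsGK} and \ref{theoremA}.
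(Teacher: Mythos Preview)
Your argument is essentially the paper's: split off the full-intersection atom $x=p_{\{1,\dots,N\}}$, bound each remaining sum over $\mathscr{B}^-$ (the paper phrases this as solving the two-point LP (\ref{l_i_max2}), you as the equivalent secant inequality for the convex map $s\mapsto 1/s$ on $[m,S-m]$), observe that the $x$-dependence collapses so that the bound reads $x+\text{(constant in }x\text{)}$, and then cap $x$ by the feasibility constraint $x\le\min_i(\gamma_i-mP(A_i))/(S-m)$. You in fact go a step beyond the paper by explicitly verifying $\uNEWV(\bc)\le\uNEWIV(\bc)$ via the weighted-average inequality $\min_i d_i\le(\sum_i c_i d_i)/(\sum_i c_i)$, a claim the paper states in Corollary~\ref{corollary2} but does not prove in Appendix~\ref{Pf_corollary2}.
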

\section{Numerical Examples}\label{sec_4}
The same eight systems as in \cite{Yang2014} are used in this section. For comparison, we include bounds that utilize $\{P(A_i)\}$ and $\{\sum_j P(A_i\cap A_j), i=1,\dots,N\}$, such as $\lKAT$, $\lNEWII$ and the optimal lower bound $\lNEWI$ in this class. Furthermore, we included the GK bound $\lGK$ which fully exploit $\{P(A_i)\}$ and $\{P(A_i\cap A_j)\}$ and the PG bound \cite{Prekopa2005}, denoted as $\lPG$, which extends the KAT bound by using $\{P(A_i)\}$, $\{\sum_j P(A_i\cap A_j)\}$ and $\{\sum_{j,l}P(A_i\cap A_j\cap A_l)\}$.

In the numerical examples, $\tilde{\bc}$ is obtained by the GK bound; the elements of $\tilde{\bc}^+$ are given by $\{\tilde{c}_i^+=\max(\tilde{c}_i,\epsilon), i=1,\dots,N\}$ where $\epsilon>0$ is small enough so that if $\tilde{\bc}\in\mathbb{R}^N_+$ then $\tilde{\bc}^+=\tilde{\bc}$.

We present $\lNEWIII(\tilde{\bc})$, $\lNEWIII(\tilde{\bc}^+)$, $\lNEWIV(\tilde{\bc}^+)$ and $\max_{\kappa}\lNEWIII(\tilde{\bc}+\kappa\bone)$ in Table~\ref{table_examples_commentsGK}. In three examples (Systems~II, III and VIII), $\tilde{\bc}\in\mathbb{R}_+^N$; therefore $\lNEWIII(\tilde{\bc})=\lNEWIII(\tilde{\bc}^+)$. In two examples (Systems~VI and VII), $\lNEWIII(\tilde{\bc})$ gives a negative value so we ignore it and replace it by $0$. The lower bound $\max_{\kappa}\lNEWIII(\tilde{\bc}+\kappa\bone)$ is done by searching $\kappa$ from $-1$ to $1$ with a fixed step length $0.005$ (so that $401$ points are used in total). We also randomly generated $100,000$ samples of $\bc\in\mathbb{R}^N_+$ to compute $\lNEWIII(\bc)$ and $\lNEWIV(\bc)$ and the largest bounds were selected and denoted as  $\lNEWIII(\bc_{\textrm{Rand}}^+)$ and $\lNEWIV(\bc_{\textrm{Rand}}^+)$.

From the results, one can see $\lNEWIII(\tilde{\bc}^+)$ is always sharper than $\lNEWIII(\tilde{\bc})$ and is sharper than $\lGK$ in most of the examples except for System~VI. The line search $\max_{\kappa}\lNEWIII(\tilde{\bc}+\kappa\bone)$ is sharper than $\lNEWIII(\tilde{\bc}^+)$ in most of the examples except for System~V. Since $\tilde{\bc}^+\in\mathbb{R}^N_+$, the class of lower bounds $\lNEWIV(\tilde{\bc}^+)$ is always sharper than $\lNEWIII(\tilde{\bc}^+)$, as expected. Furthermore, the PG bound which uses sums of joint probabilities of three events, may be even poorer (e.g., see Systems I and VI) than the numerical bound $\lNEWI$ which utilizes less information but is optimal in the class of lower bounds using $\{P(A_i)\}$ and $\{\sum_j P(A_i\cap A_j)\}$. It is also weaker than the proposed lower bounds in several cases (see Systems~I-IV).

In Table~\ref{table_compare_new34_commentsGK}, we compared $\lNEWIII(\bc)$ and $\lNEWIV(\bc)$ with randomly generated $\bc\in\mathbb{R}^N_+$. One can see that in System~VI, the maximum $\lNEWIV(\bc)$ is $0.3203$ which is sharper than the maximum $\lNEWIII(\bc)$ which is $0.3022$. Also, the percentage that $\lNEWIV(\bc)$ is strictly larger than $\lNEWIII(\bc)$ and the averages of $\frac{\lNEWIV(\bc)}{\lNEWIII(\bc)}$ are shown in Table~\ref{table_compare_new34_commentsGK}.

\section{Conclusion}
In this paper, we present new bounds on the probability of finite union of events using $\boldmath{\{P(A_i)\}}$ and weighted sums of pairwise event probabilities $\boldmath{\{\sum_j c_jP(A_i\cap A_j)\}}$. Two new classes of bounds are proposed which generalize the existing KAT bound and the recently derived YAT bounds. It is also shown that the proposed bounds can be tighter in some cases than the existing GK bound and PG bound which require more information on the events probabilities. These new general union probability bounds can be applied
to effectively estimate and analyze the error performance of
a variety of coded or uncoded communication systems.

\bibliographystyle{AIMS}
\bibliography{Union_Bounds_OR}

\medskip
\medskip

\clearpage
\begin{table}\caption{Comparison of lower bounds (* indicates $\tilde{\bc}\in\mathbb{R}_+^N$ and a bold number indicates the best results among all tested bounds.)}\label{table_examples_commentsGK}
	\small\small\small
	\centering
	\begin{tabular}{c c c c c c c c c}
		\hline
		System & I & II* & III* & IV & V & VI & VII & VIII*\\ \hline\hline
		$N$ & 6 & 6 & 6 & 7 & 3 & 4 & 4 &4\\ \hline
		$P\left(\bigcup_{i=1}^N A_i\right)$ &  0.7890  &  0.6740  &  0.7890  &  0.9687  &  0.3900 & 0.3252 & 0.5346 & 0.5854\\ \hline
		$\lKAT$ &  0.7247  &  0.6227  &  0.7222  &  0.8909  &  0.3833 & 0.2769 & 0.4434 & 0.5412\\ \hline
		$\lGK$ &  0.7601  &  0.6510  &  0.7508  &  0.9231  &  0.3813 & 0.2972 & 0.4750 & 0.5390\\ \hline
		$\lPG$ & 0.7443 & 0.6434 & 0.7556 & 0.9148 & \textbf{0.3900} & 0.3240 & \textbf{0.5281} & \textbf{0.5726}\\ \hline
		$\lNEWII$ &  0.7247  &  0.6227  &  0.7222  &  0.8909  &  \textbf{0.3900} & 0.3205 & 0.4562 & 0.5464\\  \hline
		$\lNEWI$ &  0.7487  &   0.6398  &  0.7427 &   0.9044  &  \textbf{0.3900} & \textbf{0.3252} & 0.5090 & 0.5531\\ \hline
		$\lNEWIII(\tilde{\bc})$ & 0.6359   & 0.6517*   & 0.7512*  &  0.7908   & 0.3865 & 0 & 0 & 0.5412*\\ \hline
		$\lNEWIII(\tilde{\bc}^+)$ & 0.7638   &  0.6517*  &  0.7512*  &  0.9231   & \textbf{0.3900} & 0.2951 & 0.4905& 0.5412*\\ \hline
		$\lNEWIII(\tilde{\bc}+\kappa\bone)$ & 0.7577   & 0.6539  &  0.7557  &  0.9235 &   0.3899 & 0.2993 & 0.4949 & 0.5412\\ \hline
		$\lNEWIII(\tilde{\bc}^+_{\textrm{Rand}})$ & \textbf{0.7783} & \textbf{0.6633}& \textbf{0.7810} & \textbf{0.9501} & \textbf{0.3900}& 0.3022 & 0.4992 & 0.5666\\ \hline
		$\lNEWIV(\tilde{\bc}^+)$ & 0.7638 & 0.6517 & 0.7512 & 0.9231 & \textbf{0.3900}& 0.2951 & 0.4905 & 0.5412\\ \hline
		$\lNEWIV(\tilde{\bc}^+_{\textrm{Rand}})$ & \textbf{0.7783} & \textbf{0.6633}& \textbf{0.7810} & \textbf{0.9501} & \textbf{0.3900}& 0.3203& 0.4992 & 0.5666 \\ \hline
	\end{tabular}
\end{table}

\clearpage
\begin{table}\caption{Comparison of $\lNEWIII(\bc)$ and $\lNEWIV(\bc)$ with randomly generated $\bc\in\mathbb{R}^N_+$ (a bold number indicates $\max\lNEWIV(\bc)>\max\lNEWIII(\bc)$.)}\label{table_compare_new34_commentsGK}
	\centering
	\begin{tabular}{ccccc}
		\hline
		System & V & VI & VII & VIII*\\ \hline\hline
		$N$  & 3 & 4 & 4 &4\\ \hline
		$P\left(\bigcup_{i=1}^N A_i\right)$ &   0.3900 & 0.3252 & 0.5346 & 0.5854\\ \hline
		$\max \lNEWIII(\bc)$ & 0.3900& 0.3022 & 0.4992 & 0.5666\\ \hline
		$\max \lNEWIV(\bc)$ &  0.3900& \textbf{0.3203} & 0.4992 & 0.5666 \\ \hline
		Average $\frac{\lNEWIV(\bc)}{\lNEWIII(\bc)}$ & 1.0011 & 1.065 & 1.0006 & 1.0000 \\ \hline
		Percentage $\lNEWIV(\bc)>\lNEWIII(\bc)$ & 7.82 \% & 69.6\% & 3.87\% & 0.54 \% \\ \hline
	\end{tabular}
\end{table}

\clearpage
\appendix
	\section{Proof of Theorem \ref{theorem_new_bound_iii_commentsGK}}\label{TBA2}
	We note that $\ell_i(\bc)$ is the solution of
	\begin{equation}\label{l_i}
	\begin{split}
	\min_{\{p_B: i\in B\}}&\sum_{B: i\in B}\frac{c_ip_B}{\sum_{k\in B} c_k}\\
	\st & \sum_{B: i\in B}p_B=P(A_i),\\
	&  \sum_{B: i\in B}\left(\frac{\sum_{k\in B} c_k}{c_i}\right)p_B=\frac{1}{c_i}\sum_{k}c_k P(A_i\cap A_k),\\
	& p_B\ge 0,\quad \textrm{for all}\quad B\in\mathscr{B}\quad\textrm{such that}\quad i\in B.
	\end{split}
	\end{equation}
	From (\ref{l_i}) we have that
	\begin{equation}\label{pure_temp_for_summing_commentsGK}
	\sum_{B: i\in B}\frac{c_ip_B}{\sum_{k\in B} c_k}\ge \ell_i(\bc).
	\end{equation}
	Summing (\ref{pure_temp_for_summing_commentsGK}) over $i$ and using (\ref{prob_union}) we directly obtain
	\begin{equation}\label{}
	P\left(\bigcup_{i=1}^N A_i\right)\ge \sum_{i=1}^N\ell_i(\bc).
	\end{equation}
	
	Note that we can solve (\ref{l_i}) using the same technique used in \cite{Yang2014,Yang2014ISIT}. Consider two subsets $B_1$ and $B_2$ such that $p_{B_1}\ge 0$ and $p_{B_2}\ge 0$, then denoting
	\begin{equation}\label{def_b}
	b:=\frac{\sum_{k}c_k P(A_i\cap A_k)}{c_iP(A_i)}, b_1:=\frac{\sum_{k\in B_1} c_k}{c_i},
	b_2:=\frac{\sum_{k\in B_2} c_k}{c_i},
	\end{equation}
	then problem (\ref{l_i}) reduces to
	\begin{equation}\label{l_i_c_problem}
	\begin{split}
	\ell_i(\bc)=\min_{\{p_{B_1},p_{B_2}\}}\quad &\frac{p_{B_1}}{b_1}+\frac{p_{B_2}}{b_2}\\
	\st & p_{B_1}+p_{B_2}=P(A_i),\\
	& b_1p_{B_1}+b_2p_{B_2}=bP(A_i),\\
	&p_{B_1}\ge 0,\quad p_{B_2}\ge 0.
	\end{split}
	\end{equation}
	According to \cite[Appendix B]{Yang2014}, one can get that
	\begin{equation}\label{l_i_b1_b2}
	\ell_i(\bc)=\min_{\{b_1,b_2: b_1\le b\le b_2\}}\quad P(A_i)\left(\frac{1}{b_1}+\frac{1}{b_2}-\frac{b}{b_1 b_2}\right),
	\end{equation}
	and the partial derivative of $P(A_i)\left(\frac{1}{b_1}+\frac{1}{b_2}-\frac{b}{b_1 b_2}\right)$ with respect to $b_1$ and $b_2$ are (see \cite[Appendix B, Eq.~(B.3)]{Yang2014}):
	\begin{equation}\label{partial_b1_b2}
	\begin{split}
	\frac{\partial\left[P(A_i)\left(\frac{1}{b_1}+\frac{1}{b_2}-\frac{b}{b_1 b_2}\right)\right]}{\partial b_1}&=\frac{P(A_i)}{b_1^2}\left(\frac{b-b_2}{b_2}\right),\quad \\ \frac{\partial\left[P(A_i)\left(\frac{1}{b_1}+\frac{1}{b_2}-\frac{b}{b_1 b_2}\right)\right]}{\partial b_2}&=\frac{P(A_i)}{b_2^2}\left(\frac{b-b_1}{b_1}\right).
	\end{split}
	\end{equation}
	Note that the partial derivatives are not continuous at $b_1=0$ and $b_2=0$.
	Therefore, the solution depends on the following different scenarios.
	\begin{enumerate}
		\item If $b\ge 0$ and $$\min_{\{B: i\in B\}}\frac{\sum_{k\in B} c_k}{c_i}<0,$$ the solutions of (\ref{l_i_b1_b2}) are given by
		\begin{equation}\label{solution_case1}
		\begin{split}
		b_1&=\max_{\{B: i\in B\}}\frac{\sum_{k\in B} c_k}{c_i}\quad\st\quad\frac{\sum_{k\in B} c_k}{c_i}<0,\\
		b_2&=\max_{\{B: i\in B\}}\frac{\sum_{k\in B} c_k}{c_i}.
		\end{split}
		\end{equation}
		\item If $b\ge 0$ and $$\min_{\{B: i\in B\}}\frac{\sum_{k\in B} c_k}{c_i}\ge 0,$$ the solutions of (\ref{l_i_b1_b2}) are given by
		\begin{equation}\label{solution_case2}
		\begin{split}
		b_1&=\max_{\{B: i\in B\}}\frac{\sum_{k\in B} c_k}{c_i}\quad\st\quad \frac{\sum_{k\in B} c_k}{c_i}\le b,\\
		b_2&=\min_{\{B: i\in B\}}\frac{\sum_{k\in B} c_k}{c_i}\quad\st\quad \frac{\sum_{k\in B} c_k}{c_i}\ge b.
		\end{split}
		\end{equation}
		\item If $b<0$ and $$b< \left\{\max_{\{B: i\in B\}}\frac{\sum_{k\in B} c_k}{c_i},\quad\st\frac{\sum_{k\in B} c_k}{c_i}<0\right\},$$ the solutions of (\ref{l_i_b1_b2}) are given by
		\begin{equation}\label{solution_case3}
		\begin{split}
		b_1&=\max_{\{B: i\in B\}}\frac{\sum_{k\in B} c_k}{c_i},\quad\st\frac{\sum_{k\in B} c_k}{c_i}<0,\\
		b_2&=\min_{\{B: i\in B\}}\frac{\sum_{k\in B} c_k}{c_i}.
		\end{split}
		\end{equation}
		\item If $b<0$ and $$b\ge \left\{\max_{\{B: i\in B\}}\frac{\sum_{k\in B} c_k}{c_i},\quad\st\frac{\sum_{k\in B} c_k}{c_i}<0\right\},$$ the solutions of (\ref{l_i_b1_b2}) are given by
		\begin{equation}\label{solution_case4}
		\begin{split}
		b_1&=\max_{\{B: i\in B\}}\frac{\sum_{k\in B} c_k}{c_i},\\
		b_2&=\max_{\{B: i\in B\}}\frac{\sum_{k\in B} c_k}{c_i}\quad\st\quad \frac{\sum_{k\in B} c_k}{c_i}\le b.
		\end{split}
		\end{equation}
	\end{enumerate}
	
	\section{Proof of Lemma \ref{lemma_approx}}\label{Pf_lemma_approx}
	The problems in (\ref{solution_case1_B}) to (\ref{solution_case4_B}) are exactly the $0/1$ knapsack problem with mass equals to value (see \cite{VaziraniBook2001}, the corresponding decision problem is also called subset sum problem). Unfortunately, the $0/1$ knapsack problem is NP-hard in general.
	
	However, if $\bc\in\mathbb{R}^N_+$, i.e, the case (\ref{solution_case2_B}), there exists a dynamic programming solution which runs in pseudo-polynomial time, i.e., polynomial in $N$, but exponential in the number of bits required to represent $\frac{\sum_{k}c_k P(A_i\cap A_k)}{c_iP(A_i)}$ (see \cite{VaziraniBook2001}). Furthermore, there is a fully polynomial-time approximation scheme (FPTAS), which finds a solution that is correct within a factor of $(1-\epsilon)$ of the optimal solution (see \cite{VaziraniBook2001}). The running time is bounded by a polynomial and $1/\epsilon$ where $\epsilon$ is a bound on the correctness of the solution.
	
	Therefore, if $\bc\in\mathbb{R}^N_+$, one can get a lower bound for $\ell_i(\bc)$ in polynomial time which can be arbitrarily close to $\ell_i(\bc)$ by setting $\epsilon$ small enough, i.e.,
	\begin{equation}\label{}
	\ell_i(\bc)\ge \ell_i^L(\bc,\epsilon),\quad \lim_{\epsilon\rightarrow 0^+}\ell_i^L(\bc,\epsilon)=\ell_i(\bc).
	\end{equation}
	The details are as follows. First, assume $\hat{B}_1$ and $\hat{B}_2$ are obtained by the FPTAS which satisfy
	\begin{equation}\label{}
	(1-\epsilon) \sum_{k\in B_1^{(i)}} c_k \le \sum_{k\in \hat{B}_1^{(i)}} c_k \le \sum_{k\in B_1^{(i)}} c_k,\quad \sum_{k\in B_2^{(i)}} c_k\le \sum_{k\in \hat{B}_2^{(i)}} c_k \le (1+\epsilon) \sum_{k\in B_2^{(i)}} c_k.
	\end{equation}
	Then we have
	\begin{equation}\label{}
	\begin{split}
	\sum_{k\in B_1^{(i)}} c_k&\le \min\left\{\frac{\sum_{k\in \hat{B}_1^{(i)}} c_k}{1-\epsilon}, \frac{\sum_{k}c_k P(A_i\cap A_k)}{P(A_i)}\right\}=: b_1^{(i)}, \\
	\sum_{k\in B_2^{(i)}} c_k&\ge \max\left\{\frac{\sum_{k\in B_2^{(i)}} c_k}{1+\epsilon}, \frac{\sum_{k}c_k P(A_i\cap A_k)}{P(A_i)}\right\}=: b_2^{(i)}.
	\end{split}
	\end{equation}
	Then one can get the arbitrarily close lower bound for $\ell_i(\bc)$ as
	\begin{equation}\label{}
	\ell_i(\bc)\ge \ell_i^L(\bc,\epsilon):=P(A_i)\left(\frac{c_i}{b_1^{(i)}}+\frac{c_i}{b_2^{(i)}} -\frac{c_i\sum_{k}c_k P(A_i\cap A_k)}{P(A_i)b_1^{(i)}b_2^{(i)}}\right).
	\end{equation}
	
	Therefore, we can get a lower bound for $\ProbUnion$ that is arbitrarily close to $\lNEWIII(\bc)$ in polynomial time:
	\begin{equation}\label{}
	\ProbUnion\ge\sum_i\ell_i(\bc)\ge\sum_i\ell_i^L(\bc,\epsilon).
	\end{equation}

	\section{Proof of Corollary \ref{corollary1}}\label{Pf_corollary1}
	We get the upper bound by maximizing, instead of minimizing, the objective function of (\ref{l_i}). More specifically, for any given $\bc\in\mathbb{R}^+$, a upper bound can be obtained by
	\begin{equation}\label{new_upper}
	\hbar(\bc)=\sum_{i=1}^N\hbar_i(\bc),
	\end{equation}
	where $\hbar_i(\bc)$ is defined by
	\begin{equation}\label{l_i_max}
	\begin{split}
	\hbar_i(\bc):=\max_{\{p_B: i\in B\}}&\sum_{B: i\in B}\frac{c_ip_B}{\sum_{k\in B} c_k}\\
	\st & \sum_{B: i\in B}p_B=P(A_i),\\
	&  \sum_{B: i\in B}\left(\frac{\sum_{k\in B} c_k}{c_i}\right)p_B=\frac{1}{c_i}\sum_{k}c_k P(A_i\cap A_k),\\
	& p_B\ge 0,\quad \textrm{for all}\quad B\in\mathscr{B}\quad\textrm{such that}\quad i\in B.
	\end{split}
	\end{equation}
	The resulting upper bound is given by
	\begin{equation}\label{upper3-proof}
	\begin{split}
	P\left(\bigcup_i A_i\right)&\le\sum_i \left\{P(A_i)\left[\frac{c_i}{\min_k c_k}+\frac{c_i}{\sum_k c_k}-\frac{c_i^2}{(\min_k c_k) \sum_k c_k}\frac{\sum_k c_k P(A_i\cap A_k)}{c_i P(A_i)}\right]\right\}\\
	&=\left(\frac{1}{\min_k c_k}+\frac{1}{\sum_k c_k}\right)\sum_i c_i P(A_i)-\frac{1}{(\min_k c_k)\sum_k c_k}\sum_i\sum_k c_i c_k P(A_i\cap A_k).
	\end{split}
	\end{equation}

	\section{Proof of Theorem \ref{theoremA}}\label{TBA}
	Let $x=p_{\{1,\dots,N\}}$ and define $\mathscr{B}^-=\mathscr{B}\setminus \{1,\dots,N\}$, then
	consider $\sum_{i}\ell_i'(\bc,x)+x$ as a new lower bound
	where $\ell_i'(\bc,x)$ is defined by the solution of (\ref{l_i_another}),
	which exists if and only if
	\begin{equation}\label{}
	\min_k c_k\le\frac{\tilde{\gamma}_i-(\sum_k c_k)x}{\tilde{\alpha_i}-x}\le\sum_kc_k-\min_k c_k,
	\end{equation}
	which gives
	\begin{equation}\label{}
	\begin{split}
	&\max_i\left[\frac{\tilde{\gamma}_i-\left(\sum_k c_k-\min_k c_k\right)\tilde{\alpha}_i}{\min_k c_k}\right]^+\le x\le\min_i\left[\frac{\tilde{\gamma}_i-(\min_k c_k)\tilde{\alpha}_i}{\sum_k c_k-\min_k c_k}\right].
	\end{split}
	\end{equation}
	Therefore, the new lower bound can be written as (\ref{improved_lower_bound_new4}).
	
	Next, we can prove that the objective function of (\ref{improved_lower_bound_new4}) is non-decreasing with $x$. First, we prove
	\begin{equation}\label{}
	\begin{split}
	\ell_i'(\bc,x)=&\left[P(A_i)-x\right]\\
	&\left(\frac{c_i}{\sum_{k\in B_1^{(i)}} c_k}+\frac{c_i}{\sum_{k\in B_2^{(i)}} c_k}-\frac{c_i\sum_{k}c_k \left[P(A_i\cap A_k)-x\right]}{\left[P(A_i)-x\right]\left(\sum_{k\in B_1^{(i)}} c_k\right)\left(\sum_{k\in B_2^{(i)}} c_k\right)}\right),
	\end{split}
	\end{equation}
	is continuous when $\exists B'\in\mathscr{B}^-$ such that
	\begin{equation}\label{}
	\frac{\sum_{k}c_k \left[P(A_i\cap A_k)-x\right]}{c_i\left[P(A_i)-x\right]}=\frac{\sum_{k\in B' }c_k}{c_i}.
	\end{equation}
	This can be proved by
	\begin{equation}\label{}
	\lim_{h\rightarrow 0^+} \ell_i'(\bc,x+h)=\lim_{h\rightarrow 0^+} \ell_i'(\bc,x-h)=\frac{c_i}{\sum_{k\in B' }c_k}.
	\end{equation}
	Then one can prove that when
	\begin{equation}\label{}
	\frac{\sum_{k\in B_2^{(i)} }c_k}{c_i}< \frac{\sum_{k}c_k \left[P(A_i\cap A_k)-x\right]}{c_i\left[P(A_i)-x\right]}<\frac{\sum_{k\in B_1^{(i)} }c_k}{c_i},
	\end{equation}
	the partial derivative of $\ell_i'(\bc,x)+\frac{c_i}{\sum_k c_k}x$ w.r.t. $x$ is non-negative:
	\begin{equation}\label{}
	\begin{split}
	&\frac{\partial \left(\ell_i'(\bc,x)+\frac{c_i}{\sum_k c_k}x\right)}{\partial x}\\
	&=\frac{c_i}{\sum_k c_k}-\frac{c_i}{\sum_{k\in B_1^{(i)} }c_k}-\frac{c_i}{\sum_{k\in B_2^{(i)} }c_k}\\
	&\quad+\frac{c_i\sum_k c_k}{\left(\sum_{k\in B_1^{(i)}} c_k\right)\left(\sum_{k\in B_2^{(i)}} c_k\right)}\\
	&=\frac{c_i\left(\sum_k c_k -\sum_{k\in B_1^{(i)}} c_k\right)\left(\sum_k c_k -\sum_{k\in B_2^{(i)}} c_k\right)}{\left(\sum_k c_k\right)\left(\sum_{k\in B_1^{(i)}} c_k\right)\left(\sum_{k\in B_2^{(i)}} c_k\right)}\\
	&=\frac{c_i\left(\sum_{k\notin B_1^{(i)}} c_k\right)\left(\sum_{k\notin B_2^{(i)}} c_k\right)}{\left(\sum_k c_k\right)\left(\sum_{k\in B_1^{(i)}} c_k\right)\left(\sum_{k\in B_2^{(i)}} c_k\right)}\ge 0.
	\end{split}
	\end{equation}
	Therefore, the objective function of (\ref{improved_lower_bound_new4}),
	\begin{equation}\label{}
	\sum_i \ell_i'(\bc,x)+x=\sum_i\left(\ell_i'(\bc,x)+\frac{c_i}{\sum_k c_k}x\right),
	\end{equation}
	is non-decreasing with $x$.

	Finally, defining $\tilde{\delta}$ as in (\ref{def_tilde_delta}),
	the new lower bound can be written as
	\begin{equation}\label{}
	\ProbUnion\ge\tilde{\delta}+\sum_{i=1}^N\ell_i'(\bc,\tilde{\delta}),
	\end{equation}
	where $\ell_i'(\bc,\tilde{\delta})$ can be obtained using the solution for $\ell_i(\bc)$ in Theorem \ref{theorem_new_bound_iii_commentsGK} with $b=\frac{\sum_{k}c_k P(A_i\cap A_k)}{c_iP(A_i)}$ replaced by $\tilde{b}=\frac{\sum_{k}c_k \left[P(A_i\cap A_k)-\tilde{\delta}\right]}{c_i\left[P(A_i)-\tilde{\delta}\right]}$.

	\section{Proof of Corollary \ref{corollary2}}\label{Pf_corollary2}
	Letting $x=p_{\{1,\dots,N\}}$. Defining $\mathscr{B}^-=\mathscr{B}\setminus \{1,\dots,N\}$, then
	\begin{equation}\label{new_upper}
	\hbar'(\bc)=\max_x\left[x+\sum_{i=1}^N\hbar_i'(\bc,x)\right],
	\end{equation}
	where $\hbar_i'(\bc,x)$ is defined by
	\begin{equation}\label{l_i_max2}
	\begin{split}
	\hbar_i'(\bc,x):=\max_{\{p_B: i\in B, B\in\mathscr{B}^-}&\sum_{B: i\in B, B\in\mathscr{B}^-}\frac{c_ip_B}{\sum_{k\in B} c_k}\\
	\st & \sum_{B: i\in B, B\in\mathscr{B}^-}p_B=P(A_i)-x,\\
	&  \sum_{B: i\in B, B\in\mathscr{B}^-}\left(\frac{\sum_{k\in B} c_k}{c_i}\right)p_B=\frac{1}{c_i}\sum_{k}c_k \left[P(A_i\cap A_k)-x\right],\\
	& p_B\ge 0,\quad \textrm{for all}\quad B\in\mathscr{B}^-\quad\textrm{such that}\quad i\in B.
	\end{split}
	\end{equation}
	The solution of $\hbar_i'(\bc,x)$ is independent with $x$:
	\begin{equation}\label{}
	\begin{split}
	\hbar_i'(\bc,x)
	&=\left(P(A_i)-x\right)\left(\frac{c_i}{\min_k c_k}+\frac{c_i}{\sum_k c_k-\min_k c_k}\right)\\
	&\qquad-\frac{c_i}{(\min_k c_k) (\sum_k c_k-\min_k c_k)}\sum_k c_k \left(P(A_i\cap A_k)-x\right),\\
	&=P(A_i)\left(\frac{c_i}{\min_k c_k}+\frac{c_i}{\sum_k c_k-\min_k c_k}\right)\\
	&\qquad-\frac{c_i}{(\min_k c_k) (\sum_k c_k-\min_k c_k)}\sum_k c_k P(A_i\cap A_k),
	\end{split}
	\end{equation}
	and the solution exists if and only if for all $i$
	\begin{equation}\label{}
	\min_k c_k\le\frac{\sum_k c_k P(A_i\cap A_k) -(\sum_k c_k) x}{P(A_i)-x}\le \sum_k c_k-\min_k c_k.
	\end{equation}
	Thus, we get
	\begin{equation}\label{}
	\begin{split}
	&\left\{\max_i\frac{\sum_k c_k P(A_i\cap A_k)-\left(\sum_k c_k-\min_k c_k\right)P(A_i)}{\min_k c_k}\right\}^+\\
	&\le x\le\min_i\frac{\sum_k c_k P(A_i\cap A_k)-(\min_k c_k) P(A_i)}{\sum_k c_k-\min_k c_k}
	\end{split}
	\end{equation}
	Therefore, we get the upper bound
	\begin{equation}\label{upper4-proof}
	\begin{split}
	P\left(\bigcup_i A_i\right) & \le \min_i\left\{\frac{\sum_k c_k P(A_i\cap A_k)-(\min_k c_k) P(A_i)}{\sum_k c_k-\min_k c_k}\right\} \\
	&+\left(\frac{1}{\min_k c_k}+\frac{1}{\sum_k c_k-\min_k c_k}\right)\sum_i c_i P(A_i)\\
	&-\frac{1}{(\min_k c_k)(\sum_k c_k-\min_k c_k)}\sum_i\sum_k c_i c_k P(A_i\cap A_k).
	\end{split}
	\end{equation}

\end{document}